\date{}
\theoremstyle{plain}
      \newtheorem{theorem}{Theorem}[section]
      \newtheorem{lemma}[theorem]{Lemma}
            \newtheorem{claim}[theorem]{Claim}
      \newtheorem{corollary}[theorem]{Corollary}
\theoremstyle{definition}
\theoremstyle{remark}
\title{Enumeration of intersection graphs of $x$-monotone curves\footnote{A preliminary version of this paper appeared in the proceedings of the 32nd International Symposium on Graph Drawing and Network Visualization (GD 2024).}}
\begin{document}

\author{Jacob Fox\thanks{Stanford University, Stanford, CA. Supported by NSF awards DMS-2452737 and DMS-2154129. Email: {\tt jacobfox@stanford.edu.}} \and J\'anos Pach\thanks{R\'enyi Institute of Mathematics, Budapest, Hungary. Supported by NKFIH grants K-131529, Austrian Science Fund Z 342-N31, and ERC Advanced Grant 882971 ``GeoScape.'' Email:
{\tt pach@renyi.hu}.}\and  Andrew Suk\thanks{Department of Mathematics, University of California at San Diego, La Jolla, CA, 92093 USA. Supported by NSF CAREER award DMS-1800746, NSF grant DMS-1952786, and NSF grant DMS-2246847. Email: {\tt asuk@ucsd.edu}.} }

\maketitle

\begin{abstract}
\noindent A curve in the plane is \emph{$x$-monotone} if every vertical line intersects it at most once. A family of curves are called \emph{pseudo-segments} if every pair of them have at most one point in common. We construct $2^{\Omega(n^{4/3})}$ families, each consisting of $n$ labelled $x$-monotone pseudo-segments such that their intersection graphs are different. On the other hand, we show that the number of such intersection graphs is at most $2^{O(n^{4/3}\log^2n)}$. Our proof uses a new upper bound on the number of set systems of size $m$ on a ground set of size $n$, with VC-dimension at most $d$. Much better upper bounds are obtained if we only count \emph{bipartite} intersection graphs, or, in general, intersection graphs with bounded chromatic number. 

\end{abstract}

\section{Introduction}
The intersection graph of a collection $\mathcal{C}$ of sets has vertex set $\mathcal{C}$ and two sets in $\mathcal{C}$ are adjacent if and
only if they have nonempty intersection. 
 A \emph{curve} is a subset of the plane which is homeomorphic to
the interval $[0, 1]$. A \emph{string graph} is the intersection graph of a collection of curves. It is straightforward
to show that the intersection graph of any collection of arcwise connected sets in the plane is a string graph.  A collection of curves in the plane is called a collection of \emph{pseudo-segments} if every pair of them have
at most one point in common.  Finally, we say that a curve in the plane is \emph{$x$-monotone} if every vertical line intersects it in at most one point.

For a family $\mathcal{F}$ of simple geometric objects (namely those that can be defined by semi-algebraic relations of bounded description complexity), such as segments or disks in the plane, Warren's theorem~\cite{W} can be used to show that the number of labelled graphs on $n$ vertices which can be obtained as the intersection graph of a collection of $n$ objects from $\mathcal{F}$ is $2^{O(n\log n)}$ (see \cite{PS,MM}).  Moreover, for many simple geometric objects, a result of Sauermann \cite{S} shows that these bounds are essentially tight.  Unfortunately, for general curves, Warren's theorem cannot be applied.  In this paper, we estimate the number of graphs which can be obtained as the intersection graph of curves in the plane under various constraints.

In \cite{PT06}, Pach and T\'oth showed that the number of intersection graphs of $n$ labelled pseudo-segments is at most $2^{o(n^2)}$.  This bound was later improved by Kyn\v cl \cite{K13} to $2^{O(n^{3/2}\log n)}$. It was noted in both papers that the best known lower bound on the number of intersection graphs of $n$ labelled pseudo-segments is $2^{\Omega(n\log n)}$, the number of different labellings of the vertex set.  Our first result significantly improves this bound.

\begin{theorem}\label{main0}

There are at least $2^{\Omega(n^{4/3})}$ labelled $n$-vertex intersection graphs of $x$-monotone pseudo-segments. 

\end{theorem}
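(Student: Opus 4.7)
The plan is to derive this lower bound from a near-extremal point-line configuration for the Szemer\'edi--Trotter theorem, encoding one independent bit for every incidence by a small local perturbation of a line. Set $k = \lfloor (n/3)^{1/3} \rfloor$ and consider the Elekes grid: the $k^3$ lines $\ell_{a,b}(x) = ax+b$ with $(a,b) \in [k]\times[k^2]$, and the $2k^3$ grid points $(i,j) \in [k]\times[2k^2]$. Each line is incident to exactly $k$ grid points, giving $k^4 = \Theta(n^{4/3})$ incidences in total. Replace each grid point $p = (i,j)$ by an $x$-monotone \emph{point curve} $\gamma_p$, chosen as the segment from $(i-\delta,j-C\delta)$ to $(i+\delta,j+C\delta)$ with $C = k+1$ and $\delta = 1/(10k)$, so that $C\delta < 1/5$. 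Truncate each line to an $x$-monotone segment and pad with isolated curves to reach exactly $n$ objects. Since $C$ exceeds every line's slope and $C\delta<1$, the point curves are pairwise disjoint, any two line-segments meet in at most one point, and a line meets $\gamma_p$ exactly when it is incident to $p$ in the grid.

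For every incidence $(\ell_{a,b},p)$ with $p=(i,j)$ I will assign an independent bit. If the bit is $1$, leave $\ell_{a,b}$ unchanged on $[i-2\delta,i+2\delta]$, so it still meets $\gamma_p$ at $p$. If the bit is $0$, perturb $\ell_{a,b}$ on this interval: on $[i-2\delta,i-\delta]$ it rises steeply to a plateau at height $j+C\delta+\sigma\delta'$, where $\sigma\ge 1$ is chosen to be distinct across every line perturbed at $p$ and $\delta'\ll\delta$; on $[i-\delta,i+\delta]$ it stays at that plateau, passing just above $\gamma_p$; and on $[i+\delta,i+2\delta]$ it descends back to its natural height. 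The perturbed curve is still $x$-monotone, and since $2C\delta+\sigma\delta'<1$ the plateau lies strictly between $\gamma_p$ and the next point curve $\gamma_{(i,j+1)}$ in the same column, so no new line-to-point-curve intersection is created.

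The main obstacle is to verify that, for every bit vector, the perturbed curves remain pairwise pseudo-segments. Each perturbation is supported in a $4\delta$-neighborhood of a single grid point, and the choice $\delta=1/(10k)$ ensures both that these neighborhoods are pairwise disjoint across distinct grid points and that the intersection point $x^* = (b'-b)/(a-a')$ of any two lines not concurrent at a grid point satisfies $|x^*-i|\ge 1/(k-1)>2\delta$ for every column $i$, hence lies outside every perturbation neighborhood. So the only case requiring scrutiny is two lines $\ell,\ell'$ concurrent at some grid point $p=(i,j)$: their unique intersection lies inside the perturbation strip of $p$. A short case analysis on which of $\ell,\ell'$ is perturbed (using the distinct plateau heights and the fact that the vertical order of $\ell,\ell'$ at $x=i-2\delta$ is the reverse of their order at $x=i+2\delta$) shows that $\ell$ and $\ell'$ swap their vertical order exactly once in $[i-2\delta,i+2\delta]$, hence cross there exactly once.

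Finally, distinct bit vectors produce distinct labelled intersection graphs, since the bit at $(\ell_{a,b},p)$ is exactly the indicator of the labelled edge $\{\ell_{a,b},\gamma_p\}$. As there are $\Theta(n^{4/3})$ incidences, this gives at least $2^{\Omega(n^{4/3})}$ distinct labelled intersection graphs of $n$ $x$-monotone pseudo-segments, proving the theorem.
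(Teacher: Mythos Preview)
Your proof is correct and follows essentially the same strategy as the paper: both start from an Elekes-type point--line configuration with $\Theta(n^{4/3})$ incidences, replace each point by a short segment, and encode one independent bit per incidence by locally rerouting the incident line around (or through) that segment. The differences are cosmetic---you use steeply slanted point-segments with plateau detours while the paper uses horizontal point-segments with half-circle detours---and you in fact supply more detail on the pseudo-segment verification than the paper does.
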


\noindent In the other direction, we show that this bound is tight up to logarithmic factors in the exponent for intersection graphs of $x$-monotone pseudo-segments.

\begin{theorem}\label{main12}
There are at most $2^{O(n^{4/3}\log^2n)}$ labelled $n$-vertex intersection graphs of $x$-monotone pseudo-segments in the plane.
    \end{theorem}

In the case of small clique number, we obtain the following.

\begin{theorem}\label{main3}
There are at most $2^{O(kn\log^2 n)}$ labelled $n$-vertex intersection graphs of $x$-monotone pseudo-segments with clique number at most $k$.  Moreover, for $k < n^{1/3}$, this bound is tight up to a polylogarithmic factor in the exponent.

\end{theorem}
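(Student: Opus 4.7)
The plan for the upper bound is to encode $G$ in two steps: first, find a proper coloring of $G$ with $\chi(G) = O(k \log n)$ colors; second, for each pair of color classes (each of which is a non-crossing family of $x$-monotone pseudo-segments), encode the bipartite intersection graph between them in few bits. The total number of distinct such $G$ will then be $2^{O(\chi n \log n)} = 2^{O(kn \log^2 n)}$.

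For step 1, I would use a segment-tree decomposition along the $x$-axis: at each of the $O(\log n)$ levels, the pseudo-segments ``spanning'' a given dyadic subinterval induce a permutation graph (two such pseudo-segments cross iff their vertical orders flip between the endpoints of the subinterval), which is perfect and hence $k$-colorable. Using a fresh palette of $k$ colors per level, with each pseudo-segment receiving a color from the level at which it is canonically assigned, yields a proper coloring with $O(k\log n)$ colors. For step 2, the bipartite intersection graph between two non-crossing families $A$ and $B$ is determined by the sequence of interleaved orders of $A\cup B$ at each of the $2(|A|+|B|)$ enter/exit events along the $x$-axis: between events the order can change only via $A$--$B$ swaps, which are precisely the crossings, so the inversions between two consecutive orderings pin down the set of intersections in that interval. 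This gives an encoding in $O((|A|+|B|)\log n)$ bits per pair, and summing over all $\binom{\chi}{2}$ pairs yields $O(\chi n\log n) = O(kn\log^2 n)$ bits in total.

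For the matching lower bound in the regime $k<n^{1/3}$, the plan is to partition $n$ pseudo-segments into $n/k^3$ disjoint blocks of $k^3$ pseudo-segments each, placing each block in its own horizontal band so that pseudo-segments in different blocks do not intersect. Within each block, I would apply (a scaled copy of) the construction from Theorem~\ref{main0}, which yields at least $2^{\Omega((k^3)^{4/3})} = 2^{\Omega(k^4)}$ distinct intersection graphs. Provided that construction has clique number $O(s^{1/3})$ on $s$ pseudo-segments---consistent with its Szemer\'edi--Trotter-type origin---each block has clique number $O(k)$, and, since blocks are geometrically independent, the overall clique number is at most $k$ after adjusting constants. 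The total count is $(2^{\Omega(k^4)})^{n/k^3} = 2^{\Omega(kn)}$, matching the upper bound up to a polylogarithmic factor in the exponent.

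The main obstacle is step 1 of the upper bound: producing a consistent coloring via the segment-tree decomposition requires ensuring that a pseudo-segment canonically assigned to level $\ell$ does not create a monochromatic edge with a pseudo-segment assigned to a different level, which demands a careful definition of the assignment rule (and possibly a slight loss in the $\log n$ factor). A secondary issue is verifying that the construction behind Theorem~\ref{main0} can be arranged to have clique number $O(s^{1/3})$ when applied to $s$ pseudo-segments, which must be extracted from the proof of that theorem.
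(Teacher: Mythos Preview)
Your plan shares the central insight with the paper---curves that span a vertical strip form a permutation graph, which is perfect and hence $k$-colorable by Dilworth---but the execution has two real gaps.

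\textbf{Step 1 (the coloring).} The obstacle you flag is genuine, not cosmetic. Under any natural segment-tree assignment, two pseudo-segments assigned to the \emph{same} level but to \emph{different} dyadic intervals can still cross: a curve ``through'' $[0,\tfrac12]$ but not through $[0,1]$ has projection $[0,r]$ with $r\in[\tfrac12,1)$, while a curve through $[\tfrac12,1]$ but not through $[0,1]$ has projection $[l,1]$ with $l\in(0,\tfrac12]$; their projections overlap and they may intersect. Worse, a single curve (say with projection $[0.1,0.9]$) can span several level-$2$ intervals without spanning any level-$1$ interval, so ``the level at which it is canonically assigned'' is not even well-defined. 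There is no evident way to get a global proper $O(k\log n)$-coloring out of this picture.

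\textbf{Step 2 (the encoding).} The claim that the bipartite intersection graph between two arbitrary non-crossing families $A,B$ can be encoded in $O((|A|+|B|)\log n)$ bits is not justified. Recording the interleaved order at each of the $2(|A|+|B|)$ events costs $\Theta((|A|+|B|)^2)$ bits in general, and recording only the enter/exit positions does not determine which $A$--$B$ swaps occur between events. What makes the interval argument work in the paper is that one family \emph{spans the entire strip}; then any curve $\gamma$ with an endpoint inside has, for each non-crossing spanning class, an intersection pattern that is literally an interval in the vertical order---hence $n^2$ choices, i.e.\ $O(\log n)$ bits per class per $\gamma$.

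The paper avoids both problems by not building a global coloring at all. It sets up a direct recursion on the count $g_k(n;p)$ (at most $n$ curves in a strip, at most $p$ endpoints in its interior): split the strip so each half has at most $\lceil p/2\rceil$ interior endpoints; the curves spanning a half form a permutation graph, properly $k$-colorable; each of the at most $p/2$ ``partial'' curves has at most $(n^2)^k$ intersection patterns with the spanning curves; then recurse on the partial curves in each half. This yields $g_k(n;p)\le n^{O(n+kp)}\,g_k^2(\lceil p/2\rceil;\lceil p/2\rceil)$, and iterating $O(\log n)$ times gives $g_k(n;2n)\le 2^{O(kn\log^2 n)}$. The spanning-vs-partial dichotomy is exactly what rescues both of your steps simultaneously.

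For the lower bound, your block idea works, but the paper's construction (Theorem~\ref{main4}) is already parameterized by $k$: it uses slopes $0,1,\ldots,k-1$, so the clique number is $O(k)$ directly, and one gets $2^{\Omega(kn)}$ intersection graphs without any block decomposition.
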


In \cite{PT06}, Pach and T\'oth showed that the number of string graphs on $n$ labelled vertices is $2^{\frac{3}{4}\binom{n}{2}+o(n^2)}$.  Moreover, their result holds for $x$-monotone curves.  Our next result shows that there are far fewer \emph{bipartite} intersection graphs of $x$-monotone curves in the plane.

\begin{theorem}\label{main}

There are at most $2^{O(n\log^2 n)}$ labelled $n$-vertex bipartite intersection graphs of $x$-monotone curves in the plane.

\end{theorem}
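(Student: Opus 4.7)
The plan is to reduce to a canonical arrangement and then bound the number of possible bipartite adjacency patterns using a VC-dimension argument together with the new set-system counting bound stated in the paper's abstract.

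\textbf{Normalization.} Because the intersection graph is bipartite with parts $A$ and $B$, curves within $A$ are pairwise disjoint and the same holds for $B$. I would apply a fiber-preserving (vertical) homeomorphism of the plane to straighten the curves of $A$ into horizontal segments $a_1, \ldots, a_{|A|}$ at distinct integer heights $1, 2, \ldots, |A|$, each with its own $x$-range $[\ell_i, r_i]$. Such a homeomorphism can be defined pointwise on each vertical fiber; it preserves $x$-monotonicity and pairwise disjointness of the $B$-curves, and preserves the bipartite intersection pattern verbatim. The choice of bipartition together with the height-ordering of $A$ contributes at most $2^n \cdot n! = 2^{O(n\log n)}$ to the overall count.

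\textbf{Structural bound on the neighborhood system.} For each $b\in B$, let $N(b)\subseteq A$ be the set of horizontal segments whose segments $b$ crosses. I would study the set system $\mathcal F=\{N(b):b\in B\}$ on the ordered ground set $\{a_1,\dots,a_{|A|}\}$ and try to show its VC-dimension is bounded by a constant (or at worst by $O(\log n)$). The crucial restriction is the intermediate value theorem: if $a_{j_1},\dots,a_{j_k}$ in increasing height order are all alive throughout $b$'s $x$-range, then $N(b)\cap\{a_{j_1},\dots,a_{j_k}\}$ is exactly the contiguous interval of heights lying between the minimum and maximum values of $b$. Any putative shattered subset must therefore avoid this contiguity obstruction, which, combined with the fact that each $a_i$'s alive region is a single interval and with the pairwise disjointness of the $B$-curves (which forces consistent vertical orderings on common $x$-ranges), should force the VC-dimension of $\mathcal F$ to be small.

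\textbf{Counting.} Having bounded the VC-dimension, I would invoke the new upper bound from the paper on the number of set systems of size $|B|\le n$ on a ground set of size $|A|\le n$ with bounded VC-dimension. For a constant VC-dimension this bound is $2^{O(n\log^2 n)}$, and when combined with the $2^{O(n\log n)}$ factor from the normalization step it yields the desired total count of $2^{O(n\log^2 n)}$ bipartite intersection graphs.

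\textbf{Main obstacle.} The hardest step is establishing the VC-dimension (or analogous shatter-function) bound for $\mathcal F$. The IVT constraint is clean only when the relevant segments are all alive throughout the $b$-curve's $x$-range; shattered subsets in which some $A$-segments have disjoint or partially overlapping alive intervals must be ruled out by a more delicate argument that blends the interval structure of the alive regions with the global disjointness of the $B$-family. Matching the resulting bound to the new set-system counting result so as to extract exactly $n\log^2 n$ in the exponent is where the most care is required.
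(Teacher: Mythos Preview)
Your counting step is wrong, and this is fatal. Theorem~\ref{vc} bounds the number of size-$m$ multiset systems on a ground set of size $n$ with VC-dimension at most $d$ by $2^{O(m^{1-1/d}n\log m)}$. Plugging in $m=|B|\le n$ and ground set size $|A|\le n$ gives $2^{O(n^{2-1/d}\log n)}$, which for any fixed $d\ge 2$ is at least $2^{O(n^{3/2}\log n)}$---nowhere near $2^{O(n\log^2 n)}$. Your assertion that ``for a constant VC-dimension this bound is $2^{O(n\log^2 n)}$'' is simply false, and taking $d=O(\log n)$ only makes the exponent larger. So even granting the VC-dimension bound you identify as the main obstacle, the argument would not close.

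Separately, the VC-dimension step is genuinely problematic here: Lemma~\ref{ptvc} only covers pseudo-segments, whereas Theorem~\ref{main} concerns arbitrary $x$-monotone curves, which may cross each other many times. Your IVT observation is correct but, as you note, only handles the case where the relevant $A$-segments span the full $x$-range of the $B$-curve; the general case is not addressed.

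The paper's proof takes an entirely different route and does not use VC-dimension at all. It runs a divide-and-conquer on vertical strips. In each strip, the curves that pass all the way through and belong to the opposite colour class are pairwise disjoint and hence linearly ordered by height; the interval observation you made then says any single curve has at most $n^2$ possible intersection patterns with them. This gives an $n^{O(n)}$ factor per strip; one then recurses on the at most $p/2$ curves having an endpoint in the interior of each half-strip. The recursion has depth $O(\log n)$, and multiplying out yields $2^{O(n\log^2 n)}$. The interval structure is used \emph{directly} to get $n^{O(n)}$ per level, not filtered through a set-system enumeration bound.
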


\noindent Let us remark that the $x$-monotone condition in the theorem above cannot be removed.  An interesting construction due to Keszegh and P\'alv\"olgyi \cite{KP} implies that the number of $n$-vertex \emph{bipartite} string graphs is at least $2^{\Omega(n^{4/3})}$.

For the non-bipartite case, suppose $G$ is an $n$-vertex intersection of graph of $x$-monotone curves, such that $G$ has chromatic number $q\geq 3$.  Then we can partition $V(G)$ into $q$ parts such that each part is an independent set. By further partitioning each part, arbitrarily, such that the size of each remaining part is at most $n/q$, we end up with at most $2q$ parts.  By applying Theorem \ref{main} to each pair of parts, we obtain the following corollary.

\begin{corollary}\label{main2}
There are at most $2^{O(qn\log^2 n)}$ labelled $n$-vertex intersection graphs of $x$-monotone curves with chromatic number at most $q$.

\end{corollary}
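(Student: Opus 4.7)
The plan is to follow the outline given immediately before the corollary, spelling out the bookkeeping. First, I would take a proper $q$-coloring $V(G) = V_1 \sqcup \cdots \sqcup V_q$ and then arbitrarily subdivide each $V_i$ into blocks of size at most $n/q$. Each $V_i$ contributes at most $\lceil |V_i|\,q/n\rceil \le |V_i|\,q/n + 1$ blocks, so the total number of blocks is at most $q + \sum_i |V_i|\,q/n = 2q$. Call the resulting partition $V(G) = W_1 \sqcup \cdots \sqcup W_t$ with $t\le 2q$; each $W_j$ sits inside a color class, hence is an independent set in $G$.

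Next I would observe that, since each $W_j$ is independent, $G$ is completely reconstructed from the data of (i) the labelled partition of $[n]$ into the $t$ blocks $W_1,\dots,W_t$, and (ii) the bipartite subgraph induced between each unordered pair $(W_j,W_k)$. For (i), the number of functions $[n]\to[2q]$ is $(2q)^n = 2^{O(n\log q)}$, which is subsumed by the target bound $2^{O(qn\log^2 n)}$. For (ii), the key point is that for any subfamily of the original collection of $x$-monotone curves, the curves remain $x$-monotone, and the induced bipartite subgraph between $W_j$ and $W_k$ is therefore a bipartite intersection graph of $x$-monotone curves on at most $2n/q$ vertices. Applying Theorem \ref{main} with parameter $2n/q$ gives at most $2^{O((n/q)\log^2 n)}$ choices for each such bipartite subgraph.

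Multiplying these contributions over the $\binom{t}{2} \le 2q^2$ pairs of blocks gives $2^{O(q^2 \cdot (n/q) \log^2 n)} = 2^{O(qn\log^2 n)}$, and combining with the $2^{O(n\log q)}$ factor for the partition gives the stated bound. I do not expect any real obstacle: the result is a clean divide-and-conquer reduction to the bipartite case, with all the substantive work absorbed into Theorem \ref{main}. The only points worth explicitly checking are the two already noted—that $x$-monotonicity is inherited by subfamilies, and that pairs $(W_j,W_k)$ lying inside a single $V_i$ simply contribute the empty bipartite graph—both of which are immediate.
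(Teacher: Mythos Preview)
Your argument is correct and follows essentially the same route as the paper's own derivation: color, refine each color class into blocks of size at most $n/q$ to get at most $2q$ independent blocks, and then apply Theorem~\ref{main} to each of the $O(q^2)$ pairs of blocks. The bookkeeping you supply (the $\lceil |V_i|q/n\rceil$ count giving at most $2q$ blocks, the $(2q)^n$ factor for the partition, and the $2^{O((n/q)\log^2 n)}$ bound per pair) is exactly what the paper's sketch is implicitly invoking.
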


Two drawings of a graph are \emph{weakly isomorphic} if the intersection graphs of their edges (with edges labelled by their endpoints) are the same. 
A {\it topological graph} is a graph drawn in the plane with possibly intersecting edges, and it is called \emph{simple} if every pair of edges 
intersect at most once. A topological graph is \emph{$k$-quasiplanar} if it has no $k$ pairwise crossing edges with distinct endpoints. 

The above results can be used to get upper bounds on the number of non-weakly isomorphic drawings of a graph with certain properties. The next result is an immediate corollary of Theorem~\ref{main3}, combined with the theorem of Valtr~\cite{Valtr} stating that the number of edges of a $k$-quasiplanar simple topological graph on $n$ vertices with $x$-monotone edges is $O_k(n\log n)$. 

\begin{corollary}\label{drawcor}
Given any $n$-vertex graph $G$, the number of non-weakly isomorphic drawings of $G$ as a $k$-quasiplanar simple topological graph with $x$-monotone edges is 
$2^{O_k(n\log^3 n)}$. 
\end{corollary}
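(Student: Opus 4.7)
The plan is to fix the graph $G$ on the labelled vertex set $[n]$ and bound the number of possible intersection graphs of its drawn edges. In any drawing of $G$ as a simple topological graph with $x$-monotone edges, the edges form a collection of $x$-monotone pseudo-segments, and Valtr's theorem~\cite{Valtr} bounds $m := |E(G)| = O_k(n \log n)$. Two drawings of $G$ are isomorphic precisely when these intersection graphs agree (with edges labelled by their endpoint pairs). Any two edges in a simple topological graph meet in at most one point, which is either a shared endpoint or an interior crossing, so the full intersection graph decomposes into a \emph{shared-endpoint part} (determined entirely by $G$) and a \emph{crossing graph}. Consequently, the number of non-isomorphic drawings of $G$ is bounded above by the number of possible crossing graphs on $E(G)$.

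The next step is to realise the crossing graph as the intersection graph of $x$-monotone pseudo-segments with small clique number, so that Theorem~\ref{main3} can be invoked. Given a drawing, I would truncate each edge by an arbitrarily small amount near each endpoint, so that no two truncated curves share an endpoint. The truncated curves remain $x$-monotone pseudo-segments, and two of them intersect if and only if the original edges crossed, since in a simple topological graph two edges sharing an endpoint meet only at that endpoint. Hence the crossing graph is exactly the intersection graph of these $m$ truncated pseudo-segments.

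Because the drawing is $k$-quasiplanar, the crossing graph contains no $K_k$: a $k$-clique there is a set of $k$ pairwise crossing edges, and any two crossing edges in a simple topological graph have distinct endpoints, so such a set would violate $k$-quasiplanarity. Applying Theorem~\ref{main3} to the truncated collection with $m$ vertices and clique number at most $k-1$ bounds the number of possible intersection graphs by
$$
2^{O((k-1)\, m \log^2 m)} \;=\; 2^{O_k(n \log n \cdot \log^2(n \log n))} \;=\; 2^{O_k(n \log^3 n)},
$$
as desired.

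The main subtlety to address is that the clique number of the \emph{full} edge intersection graph can be as large as the maximum degree of $G$ — every set of edges incident to a common vertex forms a clique — which is far too large to feed into Theorem~\ref{main3} directly. The essential observation is that such large cliques are forced entirely by shared endpoints, information already encoded in $G$, while only the crossing intersections carry genuine additional information; and it is precisely the clique number of the crossing part that $k$-quasiplanarity constrains. Truncating the edges is the clean way to expose this separation and reduce to the hypothesis of Theorem~\ref{main3}.
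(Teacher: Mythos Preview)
Your proof is correct and is essentially the argument the paper has in mind when it calls the corollary ``immediate'' from Theorem~\ref{main3} and Valtr's bound: apply Theorem~\ref{main3} to the $m=O_k(n\log n)$ edge-curves, using $k$-quasiplanarity to bound the clique number. You have carefully spelled out the one nontrivial detail the paper suppresses---that one must pass from the full edge-intersection graph (whose cliques can be as large as the maximum degree) to the crossing graph via truncation---and your justification that truncation preserves the pseudo-segment property while realizing exactly the crossing graph, together with the observation that pairwise crossing edges in a simple topological graph automatically form a matching, is correct.
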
 

With respect to Theorems \ref{main3} and \ref{main} and Corollaries \ref{main2} and \ref{drawcor}, we conjecture that one of the logarithmic factors in the exponent can be removed. (In the case of Corollary \ref{drawcor}, perhaps a factor $\log^2 n$ in the exponent can removed). We discuss what is known from below at the end of the paper, and we describe a simple construction that shows there are $2^{\Omega(n \log n)}$ unlabelled bipartite graphs on $n$ vertices that are intersection graphs of segments.   

Our paper is organized as follows.  In the next section, we prove Theorem \ref{main0}.  In Section~\ref{secvc}, we establish a bound on the number of set systems of size $m$ on a ground set of size $n$ with VC-dimension $d$. After completing this work, we learned that Alon, Moran, and Yehudayoff also found a proof of a similar result for balanced bipartite graphs (see Theorem 13 in \cite{alon2}). 
 Together with the well-known cutting lemma, we prove Theorem \ref{main12} in Section \ref{secpseudo}.  In Section \ref{secbip2}, we prove Theorem \ref{main}.  We conclude the paper with some remarks.

\section{Proof of Theorem \ref{main0}}

The proof of Theorem \ref{main0} is based on a well-known construction from incidence geometry.  We prove the following more general result.

\begin{theorem}\label{main4}
For $k \leq n^{1/3}$, there are at least $2^{\Omega(kn)}$ $n$-vertex labelled intersection graphs of $x$-monotone pseudo-segments with clique number at most $k$.

\end{theorem}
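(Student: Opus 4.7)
The plan is to combine the classical Elekes-type grid construction, which demonstrates the tightness of the Szemer\'edi--Trotter incidence bound, with a small local perturbation to turn a point-line configuration into $x$-monotone pseudo-segments. First, for $k \leq n^{1/3}$ and a suitable constant $C$, take $a = k$, $b = \lfloor n/(Ck) \rfloor$, and let $P = \{1,\ldots,a\} \times \{1,\ldots,b\}$ be a grid with at most $n/C$ points. Consider the lines $y = mx + t$ with integer slope $m \in \{0,1,\ldots,k-1\}$ and integer intercept $t$ chosen so that the line contains exactly $k$ grid points; the number of valid $t$ for a given $m$ is $\Theta(b)$ as soon as $k^3 = O(n)$, which is our hypothesis. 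This yields $\Theta(n)$ lines partitioned into $k$ parallel classes of $\Theta(n/k)$ lines each, and $\Theta(nk)$ point-line incidences; enlarging $C$ ensures that the number of points plus lines is at most $n$.

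Next, I would realize this configuration as an arrangement of $x$-monotone pseudo-segments. Each grid point $p$ becomes a tiny horizontal segment $s_p$, chosen short enough to be pairwise disjoint from the other $s_{p'}$ and to miss every line not incident to $p$. Each line $\ell$ becomes an $x$-monotone curve $\gamma_\ell$ that closely tracks $\ell$, except inside a narrow vertical neighborhood of each incident grid point, where an independent local bump decides whether $\gamma_\ell$ passes through $s_p$ or bypasses it above (or below). Provided the bump amplitudes are small, i.e., below the minimum separation of grid points and below a uniform transversality threshold coming from the minimum angle between distinct slopes, any two curves $\gamma_{\ell_1}, \gamma_{\ell_2}$ still cross at most once (only near the common point of the underlying lines), each $s_p$ meets precisely those $\gamma_\ell$ we have chosen, and all curves remain $x$-monotone.

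Counting then yields the lower bound: each of the $\Theta(nk)$ incidences corresponds to an independently togglable edge between the vertex $s_p$ and the vertex $\gamma_\ell$ in the intersection graph, so distinct bit patterns yield distinct labeled intersection graphs, giving at least $2^{\Omega(nk)}$ such graphs. For the clique number, the segments $s_p$ are pairwise disjoint, so a maximum clique either (i) consists of line curves alone, which forces distinct slopes and hence size at most $k$ since lines in a common parallel class do not meet, or (ii) adjoins one $s_p$ to a set of line curves through $p$, again at most $k$ since at most one line per slope passes through any point. Hence the clique number is at most $k+1$; running the construction with $k-1$ slopes instead makes it at most $k$ while preserving the $\Omega(nk)$ exponent.

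The main technical obstacle is the perturbation argument: one must choose the local bumps simultaneously small enough to preserve $x$-monotonicity, the pseudo-segment property on every pair of line curves, and the prescribed incidence pattern with the $s_p$. This is a careful but standard transversality verification, and once it is in place the counting and clique bounds follow immediately from the grid-line construction.
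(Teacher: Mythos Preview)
Your proposal is correct and follows essentially the same approach as the paper: an Elekes-type integer grid with $k$ slope classes, each point replaced by a tiny horizontal segment, and each line perturbed locally at every incident point so that the crossing can be independently switched on or off, yielding $2^{\Omega(kn)}$ distinct labelled intersection graphs of $x$-monotone pseudo-segments. The paper uses a $n^{1/3}\times n^{2/3}$ grid with $kn^{2/3}$ lines (each hitting $n^{1/3}$ points), whereas you use a $k\times \Theta(n/k)$ grid with $\Theta(n)$ lines (each hitting $k$ points); both give $\Theta(kn)$ togglable incidences, so this is only a cosmetic difference. You are in fact more explicit than the paper about the clique-number bound and the $k$ versus $k+1$ issue, which the paper's proof does not spell out.
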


\begin{proof} Let $k$ and $n$ be integers such that $k \leq n^{1/3}$. Take $$P=\{(a,b)\in \mathbb{N}^{2}:a<n^{1/3}, b<n^{2/3}\}$$ and $$\mathcal{L}=\{a'x+b'=y:a',b'\in \mathbb{N},a'<k ,b'<n^{2/3}/2\}.$$

Then we have $|P| \leq n$ and $|\mathcal{L}| \leq kn^{2/3}/2 \leq n$, and each line in $\mathcal{L}$ is incident to at least $n^{1/3}/4$ points from $P$.  For each point $p = (a,b)$ in $P$, we replace $p$ with a very short horizontal segment $\overline{p}$ with endpoints $(a,b)$ and $(a + \epsilon, b)$.  Let $\mathcal{H}$ be the resulting set of horizontal segments.

For each line $\ell \in \mathcal{L}$,  we modify $\ell$ in a small neighborhood of each point in $P$ that is incident to $\ell$ as follows. Let $\ell: y = a'x + b'$ and $p \in \ell$. Inside the circle $C$ centered at $p$ with radius $\frac{\epsilon}{2a'}$, we modify $\ell$ so that it is a semicircle along $C$ that lies either above or below $p$. After performing this operation at each point $p$ on $\ell$, and performing a small perturbation, we obtain an $x$-monotone curve $\Tilde{\ell}$.  Moreover, any two resulting $x$-monotone curves will cross at most once.  See Figure \ref{localfig}.   Let $\mathcal{L}_x$ be the resulting set of $x$-monotone curves, and note that $\mathcal{H}\cup\mathcal{L}_x$ is a set of $x$-monotone pseudo-segments.

We now count the number of intersection graphs between $\mathcal{H}$ and $\mathcal{L}_x$.  Since each line $\ell \in \mathcal{L}$ was incident to at least $n^{1/3}/4$ points in $P$, the number of different neighborhoods that can be generated for $\Tilde{\ell}$ is $2^{\Omega(n^{1/3})}$.  Moreover, two $x$-monotone curves $\Tilde{\ell},\Tilde{\ell}' \in \mathcal{L}_x$ cross if and only if their original line configuration $\ell,\ell' \in \mathcal{L}$ have distinct slope.  Since $\mathcal{L}$ has at most $k-1$ distinct slopes, the intersection graph of $\mathcal{H}\cup\mathcal{L}_x$ has clique number at most $k$, and number of such intersection graphs we can create between $\mathcal{H}$ and $\mathcal{L}_x$ is at least $2^{\Omega(kn)}$.  This completes the proof of Theorem \ref{main4}. \end{proof}

\begin{figure}
    \centering
    \includegraphics[scale=.5]{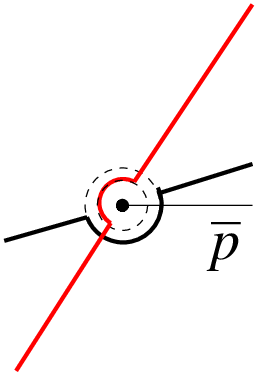}
    \caption{Modifying lines through $p$.}
    \label{localfig}
\end{figure}

\medskip

\section{Tools from VC-dimension theory}\label{secvc}

 In this section, we recall and prove results related to the notion of VC-dimension.   The {\em VC-dimension of a set system} $\mathcal{F}$ on a ground set $V$ is the {\em largest} integer $d$ for which there exists a $d$-element set $S\subset V$ such that for every subset $B\subset S$, one can find a member $A\in \mathcal{F}$ with $A\cap S=B$. Note that for a multiset system (which allows for the sets not to be necessarily distinct), the VC-dimension is the same as for the set system where we include each set that appears once.   

 Here, it will be more convenient to work with the primal shatter function of $\mathcal{F}$. The \emph{primal shatter function} of $\mathcal{F}$ is defined as $$\pi_{\mathcal{F}}(z) =  \max_{V_0 \subset V, |V_0| = z} |\{ A\cap V_0: A \in \mathcal{F}\}|.$$
 
\noindent In other words, $\pi_{\mathcal{F}}(z)$ is a function whose value at $z$ is the maximum possible number of distinct
intersections of the sets of $\mathcal{F}$ with a $z$-element subset of $V$. The primal shatter function of $\mathcal{F}^{\ast}$ is
often called the \emph{dual shatter function} of $\mathcal{F}$, and denoted as $\pi^{\ast}_{\mathcal{F}}(z) = \pi_{\mathcal{F}}(\mathcal{F}^{\ast})$.
The VC-dimension of $\mathcal{F}$ is closely related to its shatter functions. A result of Sauer and Shelah \cite{Sa,S}
states that if $\mathcal{F}$ is a set system with VC-dimensions $d$, then $\pi_{\mathcal{F}}(z)  = O(z^d)$.

Given a graph $G = (V,E)$, we define the {\em VC-dimension of} $G$ to be the VC-dimension of the set system formed by the neighborhoods of the vertices, where the neighborhood of $v\in V$ is $N(v)=\{u\in V : uv\in E\}$. In \cite{alon}, Alon et al.~proved that the number of bipartite graphs with parts of size $n$ and VC-dimension at most $d$ is at most 
$$2^{O(n^{2 - 1/d}(\log n)^{d + 2})}.$$
They further asked if the logarithmic factors can be removed.

We make progress on this question, obtaining a better bound for a more general problem, by establishing Theorem \ref{vc} below. See Alon, Moran, and Yehudayoff \cite{alon2} for a different approach to the question of \cite{alon}. Following the original proof of Alon et al.~\cite{alon}, but using the Haussler packing lemma \cite{H} (stated below) instead of Lemma~26 in \cite{alon}, one can obtain a stronger and more general bound. In addition to this, we use a different counting strategy that further removes an additional logarithmic factor. 

 


For the sake of completeness, we include the short proof below.  First, we will need some definitions. Given two sets $A,B \in \mathcal{F}$, the \emph{distance} between $A$ and $B$ is $d(A,B):=|A\bigtriangleup  B|$, where $A \bigtriangleup B = (A\cup B)\setminus (A\cap B)$ is the symmetric difference of $A$ and $B$.  We say that the set system $\mathcal{F}$ is $\delta$-separated if the distance between any two members in $\mathcal{F}$ is at least $\delta$.  The following \emph{packing lemma} was proved by Haussler in \cite{H}.

\begin{lemma}[\cite{H}]\label{haussler}

Let $\delta > 0$ and $\mathcal{F}$ be a set system on an $n$-element ground set $V$ such that $\pi_{\mathcal{F}}(z) \leq cz^d$.   If $\mathcal{F}$ is $\delta$-separated, then $|\mathcal{F}| \leq c_1(n/\delta)^{d}$ where $c_1= c_1(c,d)$.

\end{lemma}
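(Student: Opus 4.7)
The plan is to prove Lemma~\ref{haussler} by a random-sampling argument, combining the Sauer--Shelah lemma with the $\delta$-separation hypothesis. Take $S\subseteq V$ to be a uniformly random subset of size $s$, for a parameter $s$ of order $n/\delta$ (the implicit constant depending on $d$), and consider the restriction map $r_S\colon \mathcal{F}\to 2^S$ sending $F$ to $F\cap S$.

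Two ingredients drive the proof. First, restriction cannot increase the VC-dimension, so $r_S(\mathcal{F})$ still has VC-dimension at most $d$, and the Sauer--Shelah lemma gives
$$|r_S(\mathcal{F})|\;\leq\;\binom{s}{\leq d}\;=\;O_d\bigl((n/\delta)^d\bigr).$$
Second, for any two distinct $F,F'\in\mathcal{F}$ the $\delta$-separation assumption forces $|F\triangle F'|\geq \delta$, so
$$\Pr\bigl[r_S(F)=r_S(F')\bigr]\;=\;\Pr\bigl[S\cap(F\triangle F')=\emptyset\bigr]\;\leq\;(1-\delta/n)^{s}\;\leq\;e^{-s\delta/n}.$$
Together these say that the image of $r_S$ is small while $r_S$ is close to injective, and a suitable combination should bound $|\mathcal{F}|$ by the number of distinct traces.

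The main obstacle is combining these two estimates without incurring an extra $(\log(n/\delta))^d$ factor. The naive route --- a union bound over all $\binom{|\mathcal{F}|}{2}$ pairs to force $r_S$ to be injective --- requires $s\gtrsim (n/\delta)\log|\mathcal{F}|$ and then yields only the Dudley-type estimate $|\mathcal{F}|\leq C_d(n/\delta)^d(\log(n/\delta))^d$. To obtain the sharp $O_d((n/\delta)^d)$ bound one should instead analyze the expected number of ordered collision pairs $\mathbb{E}\bigl[\sum_i t_i^2\bigr]$, where $t_i$ is the multiplicity of the $i$th trace under $r_S$: the pairwise estimate above bounds this quantity above by $|\mathcal{F}|+|\mathcal{F}|^2 e^{-s\delta/n}$, while Cauchy--Schwarz together with the Sauer--Shelah bound on the number of classes bounds it below by $|\mathcal{F}|^2/O_d(s^d)$. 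Choosing $s$ to be a sufficiently large $d$-dependent constant times $n/\delta$, these two inequalities should jointly yield $|\mathcal{F}|=O_d((n/\delta)^d)$. The delicate part is closing the gap between the two regimes $|\mathcal{F}|\lesssim e^{s\delta/n}$ and $|\mathcal{F}|\gg e^{s\delta/n}$ without logarithmic loss; this is where Haussler's original argument invokes an iterated potential-function refinement rather than a single sampling step, and I expect the proof to carry out some version of that iteration.
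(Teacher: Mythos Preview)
The paper does not prove this lemma; it is quoted from Haussler~\cite{H} and used as a black box in the proof of Theorem~\ref{vc}. So there is no ``paper's proof'' to compare against, and your proposal is an attempt to supply an argument the authors deliberately omit.

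On the substance of your sketch: the random-sampling framework is the right one, and you correctly identify that a naive union bound only gives the Dudley bound with an extra $(\log(n/\delta))^d$. However, your second-moment fix does not work as written. With $s=Cn/\delta$ for a constant $C=C(d)$, writing $M=|\mathcal{F}|$ and $B=\binom{s}{\le d}=O_d((n/\delta)^d)$, your two inequalities combine to
\[
\frac{M^2}{B}\;\le\;\EE\Bigl[\sum_i t_i^2\Bigr]\;\le\;M+M^2 e^{-C},
\]
which rearranges to $M\bigl(1/B-e^{-C}\bigr)\le 1$. This is only useful when $e^{-C}<1/B$, i.e.\ when $C\gtrsim d\log(n/\delta)$, forcing $s\approx (n/\delta)\log(n/\delta)$ and reinstating exactly the logarithmic loss you were trying to avoid. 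So the sentence ``choosing $s$ to be a sufficiently large $d$-dependent constant times $n/\delta$, these two inequalities should jointly yield $|\mathcal{F}|=O_d((n/\delta)^d)$'' is false: the Cauchy--Schwarz/collision argument is no stronger here than the union bound.

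You seem to sense this, since you end by deferring to ``Haussler's iterated potential-function refinement.'' For the record, Haussler's actual proof is not a refinement of the sampling argument at all: he works with the \emph{unit-distance} (one-inclusion) graph on $\mathcal{F}$, uses a shifting/compression argument to show its edge density is at most $d$, and then applies a leave-one-out probabilistic estimate to relate the size of a $\delta$-separated family to this density. If you want a self-contained proof, that combinatorial route is the one to follow; the pure second-moment approach you outline does not reach the sharp exponent.
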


\medskip

Let $h_{c,d}(m,n)$ denote the number of multiset systems $\mathcal{F}$ consisting of $m$ subsets of $[n]$ such that $\pi_{\mathcal{F}}(z) \leq cz^d$.   Let $h'_{c,d}(m,n)$ denote the number of set systems $\mathcal{F}$ of $m$ subsets of $[n]$ such that $\pi_{\mathcal{F}}(z) \leq cz^d$.  Clearly, $h'_{c,d}(m,n) \leq h_{c,d}(m,n)$.  It follows that $h'_{c,d}(m,n)=0$ if $m > cn^d$. Further, we can relate the two as follows. If we pick a multiset system consisting of $m$ sets whose primal shatter function is at most $cz^d$, then by throwing out repeated sets, we get a set system on the same ground set consisting of $m' \leq m$ sets. We then have to fill out these $m'$ sets to $m$ sets with repeats, including each set at least once. We thus have \begin{equation}\label{hfromh'} h_{c,d}(m,n)=\sum_{m' \leq m}h'_{c,d}(m',n){m-1 \choose m'-1}.
 \end{equation}

In what follows, $c$ and $d$ are fixed and the implicit constant in the big-O depends on $c$ and $d$. 

\begin{theorem}\label{vc}
Let $c,d \geq 2$ be fixed and $n,m \geq 2$. Then the number $h_{c,d}(m,n)$ of multiset systems $\mathcal{F}$ of $m$ subsets of $[n]$ such that $\pi_{\mathcal{F}}(z) \leq cz^d$,  satisfies $$h_{c,d}(m,n) = 2^{O(m^{1-1/d}n\log m)}.$$ Furthermore, if $m > cn^d$, then $$h_{c,d}(m,n)=2^{O(n^d\log m)}.$$
 \end{theorem}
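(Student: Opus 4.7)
The plan is to bound $h'_d(m, n)$, the number of (repetition-free) set systems of size $m$ on $[n]$ with VC-dimension at most $d$, by induction on $m$, and then derive $h_d(m,n)$ via formula~(\ref{hfromh'}). The target inductive bound is $h'_d(m,n) \leq 2^{Cnm^{1-1/d}\log m}$ for a constant $C=C(d)$. The central step is to apply Haussler's packing lemma (Lemma~\ref{haussler}) with $\delta = C_0 n/m^{1/d}$ for $C_0 = (2c_1)^{1/d}$, so that any maximal $\delta$-separated subfamily $\mathcal{N} \subseteq \mathcal{F}$ has $|\mathcal{N}| \leq c_1(n/\delta)^d \leq m/2$. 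By maximality, every $A \in \mathcal{F}$ has some $A' \in \mathcal{N}$ with $|A \triangle A'| < \delta$.

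To count, I first count the choices of $\mathcal{N}$ by induction (it has VC-dimension at most $d$ and size $\leq m/2$), then specify $\mathcal{F}$ given $\mathcal{N}$: for each $A \in \mathcal{F}$, name its nearest net element ($\log|\mathcal{N}| \leq \log m$ bits) and the symmetric difference, which is a subset of size $<\delta$ and costs $\log\binom{n}{\leq\delta} = O(\delta\log(n/\delta))$ bits. With the chosen $\delta$, we have $m\delta = C_0 nm^{1-1/d}$ and $\log(n/\delta) = \Theta(\log m)$, so the total conditional cost is $O(nm^{1-1/d}\log m)$; in the working regime $m \leq n^d$ we also have $m\log m \leq nm^{1-1/d}\log m$, so this additive term is absorbed.

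Combining with the inductive hypothesis applied to $\mathcal{N}$, the exponent of $h'_d(m,n)$ is at most $C \cdot 2^{-(1-1/d)} \cdot nm^{1-1/d}\log m + O(nm^{1-1/d}\log m)$. Since $d \geq 2$ makes $1 - 2^{-(1-1/d)}$ a positive constant, picking $C$ large enough closes the induction; the base cases for small $m$ follow from the crude bound $h'_d(m,n) \leq 2^{mn}$. To convert to the multiset count via~(\ref{hfromh'}) for $m \leq n^d$, each term $h'_d(m',n)\binom{m-1}{m'-1}$ is bounded by $2^{O(nm^{1-1/d}\log m)}$ and the sum contributes only a polynomial factor. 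For $m > n^d$, Sauer-Shelah forces $m' \leq \binom{n}{\leq d} = O(n^d)$, giving $h'_d(m',n) \leq 2^{O(n^d\log n)}$ and $\binom{m-1}{m'-1} \leq (em/m')^{m'} \leq 2^{O(n^d\log m)}$, so the sum is $2^{O(n^d\log m)}$.

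The main subtlety will be the calibration of $\delta$: it must shrink the net by a definite fraction (so that the exponents form a convergent geometric series across the levels of recursion) while keeping the per-set encoding cost $\delta \log(n/\delta)$ matched to the target $nm^{-1/d}\log m$. The "different counting strategy" over Alon et al.\ is, I believe, precisely this recursion on the size of $\mathcal{N}$: a one-shot encoding of $\mathcal{N}$ using $|\mathcal{N}| \cdot n$ bits would be too expensive and would reintroduce the extra logarithmic factor that the theorem claims can be avoided.
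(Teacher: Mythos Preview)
Your proof is correct, but the paper takes a different (though closely related) route. Instead of recursive halving, it performs a single greedy farthest-first pass: fix a linear order on $2^{[n]}$, let $S_1$ be the first element of $\mathcal{F}$, and for $i \geq 2$ choose $S_i \in \mathcal{F}\setminus\{S_1,\ldots,S_{i-1}\}$ to maximize $\min_{j<i} d(S_i,S_j)$, recording a nearest predecessor index $j_i$. Since $\{S_1,\ldots,S_i\}$ is $\delta_i$-separated for $\delta_i = d(S_i,S_{j_i})$, Lemma~\ref{haussler} gives $\delta_i = O(n i^{-1/d})$ for every $i$ simultaneously. The family is then encoded by $S_1$ (at most $2^n$ choices), the sequence $(j_2,\ldots,j_m)$ (at most $m^m$ choices), and each symmetric difference $S_i \triangle S_{j_i}$ (at most $\binom{n}{\leq t_i}$ choices with $t_i = O(ni^{-1/d})$); the sum $\sum_{i=2}^m i^{-1/d}\log i = O(m^{1-1/d}\log m)$ yields the bound directly, with no induction.

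So your guess about the paper's ``different counting strategy'' was slightly off: it does not recurse on a net but rather unfolds the whole multi-scale structure in one sweep, letting the packing radius shrink continuously as $i$ grows. Both arguments rest on the same two ingredients---Haussler's packing lemma plus symmetric-difference encoding---and both avoid the extra logarithmic factor that a one-shot net at a single fixed radius would incur. The paper's version is a bit slicker in that it requires no inductive hypothesis and no constant-balancing of the form $C \cdot 2^{-(1-1/d)} + C' \leq C$; your recursive version is equally valid and arguably makes the geometric-series saving more transparent. Your treatment of the case $m > n^d$ via~(\ref{hfromh'}) and Sauer--Shelah is essentially identical to the paper's.
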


 \begin{proof} 
Consider a linear ordering of the subsets of $[n]$. Let $\mathcal{F}$ be a multiset system of $m$ subsets of $[n]$. Let $S_1$ be the first set in $\mathcal{F}$ by the linear ordering. We will order the sets in $\mathcal{F}$ as $S_1,S_2,\ldots,S_m$ as follows. After picking $S_1,\ldots,S_{i-1}$, let $\delta_i=\max_{S \in \mathcal{F} \setminus \{S_1,\ldots,S_{i-1}\}} \min_{1 \leq j \leq i-1} d(S,S_j)$, and $S_i$ be a set $S$ that obtains the maximum, and $j_i$ be a $j$ that obtains the minimum $d(S_i,S_j)$. By our choice of the sets, 
the minimum of $d(S_a,S_b)$ over all $1 \leq a < b \leq i$ is $d(S_{j_i},S_i)$. By the Haussler packing lemma, we thus have $i = O((n/\delta_i)^d)$, or equivalently, $\delta_i = O(i^{-1/d}n)$. 

We now upper bound the number of choices of $\mathcal{F}$. There are at most $2^n$ choices of $S_1$. Each $j_i$ is a positive integer at most $i-1$, so there are at most $(m-1)! \leq m^m$ choices of $j_2,\ldots,j_m$. Having picked out this sequence of $j_i$'s, and having picked $S_1,\ldots,S_{i-1}$, we know $S_i$ must have symmetric difference at most $t_i:=O(i^{-1/d}n)$ from $S_{j_i}$. Thus, given this information, the number of choices for $S_i$ is at most $O(n^{t_i})$. Therefore, we obtain that the number of choices of $\mathcal{F}$ is at most 
\begin{eqnarray*} 2^n m^m \prod_{i=2}^m O(n^{t_i}) & \leq &  2^n m^m \prod_{i=2}^m (O(i^{1/d}))^{O(i^{-1/d}n)}\\ &= & 2^n m^m 2^{n\sum_{i=2}^m O(i^{-1/d}\log i)} \\ & = & 2^n m^m 2^{O(m^{1-1/d} n \log m)}.\end{eqnarray*} 
Note that the $2^n$ factor is at most the last factor. Hence, we obtain that the count is at most 
$m^m2^{O(m^{1-1/d} n \log m)}$. If $m \leq cn^d$, then the last factor is largest and this gives the desired bound. 

So we may assume we are in the case $m > cn^d$. In this case, by Equation (\ref{hfromh'}), the fact that $h'_{c,d}(m',n)=0$ for $m' > cn^d$ and $h'_{c,d}(m',n) \leq h_{c,d}(cn^d,n)$, we get $$h_{c,d}(m,n) \leq h_{c,d}(cn^d,n)\sum_{m' \leq cn^d}{m-1 \choose m'-1} = 2^{O(n^d \log m)}.$$
Notice that in this case, the first bound still holds, as $m^{1-1/d}n \log m \geq n^d \log m$.

 \end{proof}

\section{Intersection graphs of $x$-monotone pseudo-segments}\label{secpseudo}

In this section, we prove Theorem \ref{main12}.  We will need the following lemmas.  Recall that a \emph{pseudoline} is a two-way infinite $x$-monotone curve in the plane. An \emph{arrangement of pseudolines} is a finite collection of pseudolines such that any two members have at most one point in common, at which they cross, and each intersection point has a unique $x$-coordinate.  Given an arrangement $\mathcal{A}$ of $n$ pseudolines, we obtain a sequence of permutations of $1,\ldots, n$ by sweeping a directed vertical line across $\mathcal{A}$.  This sequence of permutations is often referred to as a \emph{partial allowable sequence of permutations}, which starts with the identity permutation $(1,\ldots, n)$, such that i) the move from one permutation to the next consists of swapping two adjacent elements, and ii) each pair of elements switch at most once.  We say that two pseudoline arrangements $\mathcal{A}_1$ and $\mathcal{A}_2$ are \emph{$x$-isomorphic} if they give rise to the same sequence of permutations, that is, a sweep with a vertical line meets the crossing pairs in the same order.

\begin{lemma}[\cite{St}]\label{arrange}
The number of arrangements of $m$ pseudolines, up to $x$-isomorphism, is $2^{\Theta(m^2\log m)}$.
    
\end{lemma}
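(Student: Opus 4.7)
The plan is to translate the counting problem into counting \emph{allowable sequences} of permutations, which by the definition of $x$-isomorphism are in bijection with the $x$-isomorphism classes of pseudoline arrangements. Reading off the sequence of adjacent transpositions from an allowable sequence gives a word $s_{i_1}s_{i_2}\cdots s_{i_N}$ in the simple reflections $s_i=(i,i+1)$ of the symmetric group $S_m$ that multiplies to the longest element $w_0=(m,m-1,\dots,1)$ and has length $N=\binom{m}{2}$. Since $\binom{m}{2}$ equals the number of inversions of $w_0$, each such word is a reduced decomposition of $w_0$, and conversely every reduced decomposition for $w_0$ can be drawn as a wiring diagram which realizes an arrangement of $m$ pseudolines. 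So the lemma reduces to counting reduced decompositions of $w_0\in S_m$.

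For the upper bound, every such word has length exactly $\binom{m}{2}$ with at most $m-1$ choices for each generator $s_{i_j}$. Hence the number of candidate words is at most $(m-1)^{\binom{m}{2}} = 2^{O(m^2\log m)}$, which yields the desired upper bound without even needing to restrict to reduced words.

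For the matching lower bound, I would invoke Stanley's formula~\cite{St}, which evaluates the number of reduced decompositions of $w_0\in S_m$ as
$$\frac{\binom{m}{2}!}{1^{m-1}\cdot 3^{m-2}\cdot 5^{m-3}\cdots (2m-3)^1}.$$
A routine Stirling estimate bounds the numerator by $2^{m^2\log_2 m+O(m^2)}$ and the denominator by $2^{(m^2/2)\log_2 m+O(m^2)}$, so the ratio is $2^{\Omega(m^2\log m)}$.

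The only real obstacle is the lower bound, which relies on invoking (or reproving) Stanley's nontrivial hook-length-style identity for reduced decompositions of $w_0$. The upper bound falls out immediately from the length-$\binom{m}{2}$ description of allowable sequences, and this is all that is needed in the subsequent application in Section~\ref{secpseudo}.
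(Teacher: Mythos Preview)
Your proposal is correct and matches the paper's treatment: the paper does not prove this lemma but simply cites Stanley~\cite{St}, whose formula for the number of reduced decompositions of $w_0\in S_m$ is exactly what you invoke for the lower bound, while your elementary word-count gives the upper bound. The bijection you describe between $x$-isomorphism classes and reduced words for $w_0$ is precisely the intended content of the citation, and only the upper bound is used downstream in Section~\ref{secpseudo}.
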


We will also need the following result, known as the \emph{zone lemma} for pseudolines.

\begin{lemma}[\cite{bern}]\label{zone}
    Let $\mathcal{A}$ be an arrangement of $m$ pseudolines.  Then for any $\alpha \in \mathcal{A}$, the sum of the numbers of sides in all the cells in the arrangement of $\mathcal{A}$ that are supported by $\alpha$ is at most $O(m)$.
\end{lemma}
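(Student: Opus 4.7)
The plan is to prove the zone lemma by induction on $m$, adapting the classical proof of the zone theorem for line arrangements (Chazelle, Edelsbrunner, Guibas, Sharir). Fix $\alpha \in \mathcal{A}$ and let $Z(\alpha)$ denote the collection of cells in the arrangement that have $\alpha$ on their boundary; we wish to bound $\sum_{C \in Z(\alpha)} |\partial C|$. After a small perturbation we may treat $\alpha$ as a directed curve from left to right, and split the total edge count into edges lying above $\alpha$ and edges lying below $\alpha$; by up-down symmetry it suffices to bound the ``upper zone.''

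Next, I would refine the count by assigning a type to each edge $e$ of the upper zone. The supporting pseudoline $\beta$ of $e$ is some $\beta \neq \alpha$, which crosses $\alpha$ at a unique point $p_\beta$; fix an orientation of $\beta$, and call $e$ a \emph{left-edge} if $e$ lies on the portion of $\beta$ preceding $p_\beta$ and a \emph{right-edge} otherwise. Let $L(m)$ and $R(m)$ denote the maximum number of left-edges and right-edges, respectively, taken over all arrangements of $m$ pseudolines and all choices of $\alpha$. A left-right symmetry (reflect the arrangement across a vertical line) identifies $L(m)$ and $R(m)$, so it is enough to prove $L(m) = O(m)$.

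For the inductive step I would select $\beta \in \mathcal{A}\setminus\{\alpha\}$ whose crossing $p_\beta$ with $\alpha$ is the rightmost among all such crossings in the upper half-plane, and remove $\beta$ from the arrangement. Applying the inductive hypothesis gives at most $L(m-1)$ left-edges in the upper zone of $\alpha$ in $\mathcal{A}\setminus\{\beta\}$. Reinserting $\beta$ can (i) contribute new edges that lie on $\beta$ itself and (ii) split pre-existing edges of the zone into two pieces. Because $p_\beta$ is the rightmost crossing in the upper half-plane, the rightward portion of $\beta$ above $\alpha$ enters only one cell of the current zone, while the leftward portion can be traced from left to right and at each crossing with another pseudoline contributes only a bounded number of new left-edges, after appropriately accounting for splittings. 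This gives a recurrence $L(m) \leq L(m-1) + c$ for an absolute constant $c$, and hence $L(m) = O(m)$.

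The main obstacle is the combinatorial bookkeeping in the reinsertion step: one has to verify that the new edges introduced by $\beta$, together with the edges of the old zone that are split by $\beta$, contribute only $O(1)$ additional left-edges in total, rather than a number proportional to the number of cells that $\beta$ traverses. The key point is that pieces of $\beta$ lying in cells of the old zone that are not themselves in the new zone of $\alpha$ do not count, and the ``rightmost crossing'' choice ensures that the additional left-edges are essentially local to a single traversal. For genuine lines this is the classical proof (with constant about $5$); the pseudoline adaptation uses only that each pair of pseudolines crosses at most once and transversally, with no appeal to straightness, so the same argument goes through verbatim.
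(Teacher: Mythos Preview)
The paper does not prove this lemma; it is stated with a citation to \cite{bern} and used as a black box. So there is no ``paper's own proof'' to compare against.

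Your sketch follows the standard inductive proof of the zone theorem (Chazelle--Guibas--Lee, Edelsbrunner--O'Rourke--Seidel, and the pseudoline version in \cite{bern}), and the overall strategy is correct: split into upper/lower zones, classify zone edges as left/right according to which side of the crossing $p_\beta$ they lie on, remove the pseudoline $\beta$ whose crossing with $\alpha$ is rightmost, and argue that reinserting $\beta$ increases the left-edge count by only $O(1)$. Since the argument uses nothing beyond ``any two curves cross exactly once,'' it carries over to pseudolines verbatim, as you note.

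The one place where your write-up is looser than it should be is the reinsertion step. You say the leftward portion of $\beta$ ``at each crossing with another pseudoline contributes only a bounded number of new left-edges,'' which as written would give an $O(m)$ increment, not $O(1)$. The actual point is sharper: because $p_\beta$ is rightmost, the portion of $\beta$ in the upper half-plane that is relevant to left-edges lies in a single cell of the old zone, so $\beta$ contributes at most one new left-edge on itself and splits at most one existing left-edge into two. That is what yields $L(m)\le L(m-1)+c$ with $c$ an absolute constant. You flag this as ``the main obstacle'' and gesture at the right resolution, but the sentence quoted above misstates it; if you were writing this out in full you would want to be precise here.
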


In \cite{PT06}, Pach and T\'oth proved that there is an absolute constant $d$ such that every intersection graph of a collection of pseudo-segments has VC-dimension at most $d$.  Their proof was based on hypergraph Ramsey theory, and no explicit bound for $d$ was given.  Here, we will use the following result due to Przytycki \cite{P} to bound the primal shatter function of multiset systems corresponding to intersection graphs between pseudolines and pseudo-segments.

\begin{lemma}[\cite{P}]\label{arc}
    Let $\mathcal{S}$ be a punctured sphere with Euler characteristic $\chi < 0 $.  If $p$, $q$ are punctures on $\mathcal{S}$, then the maximal cardinality of a set $\mathcal{A}$ of essential simple
arcs on $\mathcal{S}$ that are starting at $p$ and ending at $q$, and pairwise intersecting at most
once, is $|\chi|(|\chi| + 1)/2$.
\end{lemma}

We say that a collection $\mathcal{A}$ of $x$-monotone pseudo-segments in the plane is \emph{double grounded} if there are vertical lines $\ell_1$ and $\ell_2$ (called grounds) such that each curve in $\mathcal{A}$ has its left endpoint on $\ell_1$ and its right endpoint on $\ell_2$.

 \begin{lemma}\label{ptvc}
Let $\mathcal{A}$ be a collection of double-grounded $x$-monotone curves, with grounds $\ell_1$ and $\ell_2$, and let $\mathcal{B}$ be a collection of $x$-monotone curves such that $\mathcal{A}\cup\mathcal{B}$ is a collection of pseudo-segments.  Then, for the set system $\mathcal{F} = \{N(\alpha) \subset \mathcal{B}: \alpha \in \mathcal{A}\}$ with ground set $\mathcal{B}$, where $N(\alpha)$ denotes the set of curves in $\mathcal{B}$ that cross $\alpha \in \mathcal{A}$, we have $\pi_{\mathcal{F}}(z) = O(z^2)$ and $\pi^{\ast}_{\mathcal{F}}(z) = O(z^4)$.
 \end{lemma}

 \begin{proof}

  We first show that $\pi_{\mathcal{F}}(z) = O(z^2)$.  Let us consider a subset $\mathcal{B}' \subset \mathcal{B}$ of $z$ curves in $\mathcal{B}$.  Hence, $\mathcal{B}'$ corresponds to $z$ vertices in the ground set of $\mathcal{F}$.  Using the $x$-monotonicity of the curves and the assumption that $\mathcal{A}\cup\mathcal{B}$ is a collection of pseudo-segments, we know that $\alpha \in \mathcal{A}$ and $\beta \in \mathcal{B}'$ do not cross each other if and only if $\alpha$ lies below the left and right endpoints of $\beta$, or above the left and right endpoints of $\beta$.   See Figure \ref{figabex}.

\begin{figure}
\centering
\includegraphics[width=7cm]{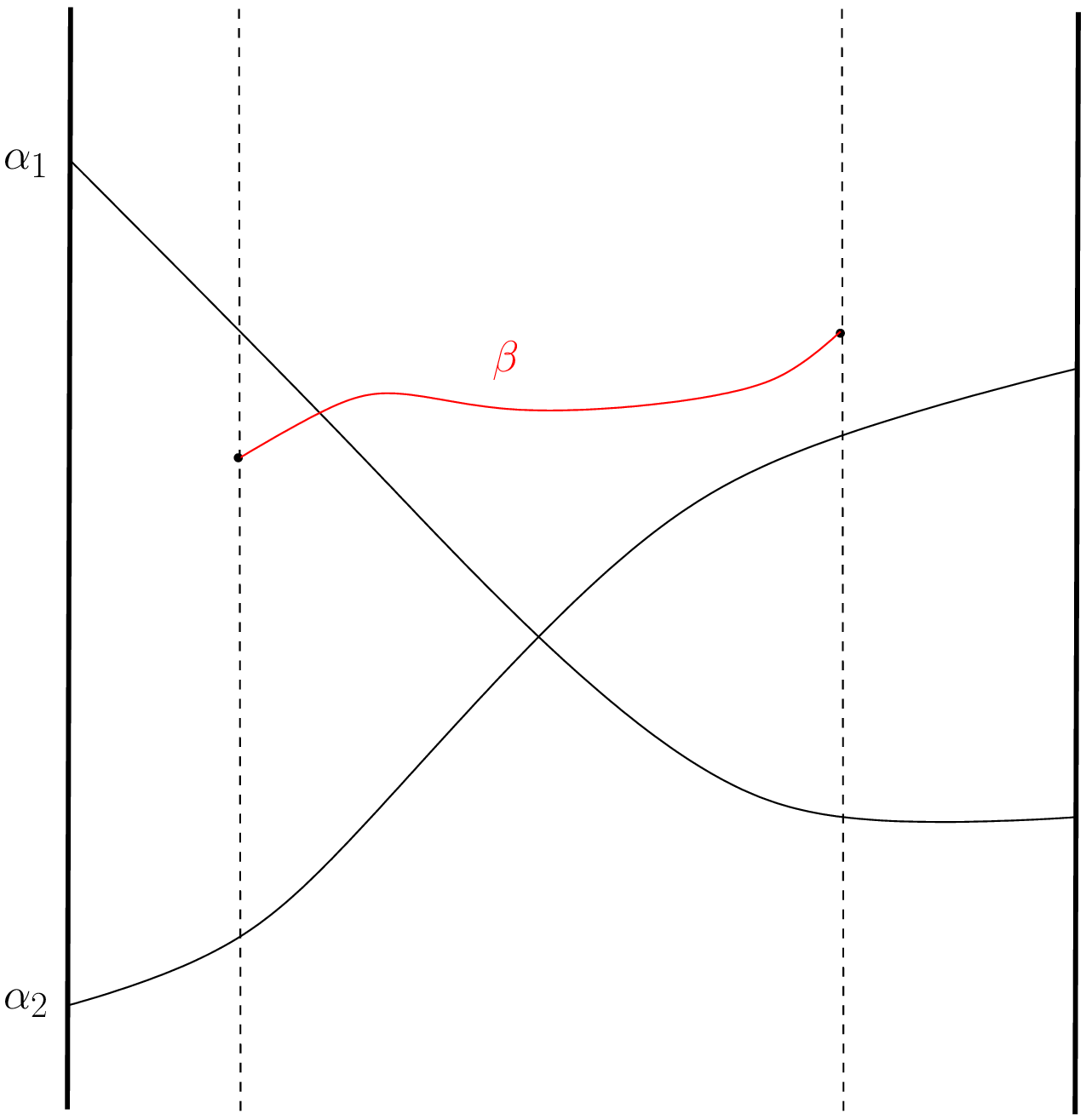}
\caption{For $\alpha_1,\alpha_2 \in \mathcal{A}$ and $\beta \in \mathcal{B}'$, $\beta$ crosses $\alpha_1$ but does not cross $\alpha_2$.}\label{figabex}
\end{figure}

Let $v$ be a point between $\ell_1$ and $\ell_2$, high above all of the curves in $\mathcal{A}\cup\mathcal{B}'$.  In order to see that $\pi_{\mathcal{F}}(z) = O(z^2)$, we can assume that the curves in $\mathcal{A}$ and $\mathcal{B}'$ lie on a sphere. By contracting the lines $\ell_1$ and $\ell_2$ into single points $p$ and $q$, resp., all curves in $\mathcal{A}$ have one endpoint at $p$ and the other at $q$. 
We turn the endpoints of the $z$ curves in $\mathcal{B}'$ into $2z$ punctures on the sphere.  Further, we place three additional punctures at $p, q$, and $v$.  Hence, every curve in $\mathcal{A}$ is an arc between $p$ and $q$.  As a result, we have a $(2z+3)$-punctured sphere $\mathcal{S}$, which has Euler characteristic $-2z - 1$. By Lemma~\ref{arc}, the maximum number of pairwise non-homotopic essential simple arcs between $p$ and $q$ on $\mathcal{S}$, pairwise crossing at most once, is at most $(z+1)(2z+1)$.  Hence, for two curves $\alpha,\alpha' \in \mathcal{A}$ that are homotopic on $\mathcal{S}$, and for an endpoint $u$ of some curve $\beta \in \mathcal{B}'$, both $\alpha,\alpha'$ must lie above or below $u$. In other words, the curves in $\mathcal{A}$ can cross the curves in $\mathcal{B}'$ in at most $(z + 1)(2z+1) = O(z^2)$ different ways.  
 
In order to show that $\pi^{\ast}_{\mathcal{F}}(z) = O(z^4)$, consider any set $\mathcal{A}'\subset\mathcal{A}$ of $z$ double grounded $x$-monotone pseudo-segments in $\mathcal{A}$.  The arrangement of $\mathcal{A}'$ will partition the region between $\ell_1$ and $\ell_2$ into at most $O(z^2)$ cells.  The endpoints of each $x$-monotone pseudo-segment in $\mathcal{B}$ will lie in one of these cells.  Hence, for $\beta \in \mathcal{B}$, there are $O(z^4)$ ways to choose the cells that contain the endpoints of $\beta$.   Once the endpoints of $\beta$ have been chosen, we have determined exactly which curves among $\mathcal{A}'$ that $\beta$ crosses.  Indeed, by the $x$-monotone and pseudo-segment condition, we know exactly which curves in  $\mathcal{A}'$ lie above (below) the left endpoint of $\beta$, and which curves in  $\mathcal{A}'$ lie above (below) the right endpoint of $\beta$.   Hence, $\pi^{\ast}_{\mathcal{F}}(z) = O(z^4)$. \end{proof}

Let $\mathcal{A}$ be a collection of double grounded $x$-monotone pseudo-segments in the plane.  The \emph{vertical decomposition} of the arrangement of $\mathcal{A}$ is obtained by drawing a vertical segment from each crossing point and endpoint in the arrangement, in both directions, and extend it until it meets the arrangement of $\mathcal{A}$, else to $\pm \infty$.  Since $\mathcal{A}$ is double grounded, the grounds will appear in the vertical decomposition.  The vertical decomposition of $\mathcal{A}$ partitions the plane into cells called \emph{generalized trapezoids}, where each generalized trapezoid is bounded by at most two curves from $\mathcal{A}$ from above or below, and at most two vertical segments on the sides.  See Figure \ref{figvertical}. By applying standard random sampling arguments (e.g., see \cite{CS} or Lemma 4.6.1 in \cite{mat}), we obtain the following result known as the \emph{weak cutting lemma}.

\begin{figure}
\centering
\includegraphics[width=5cm]{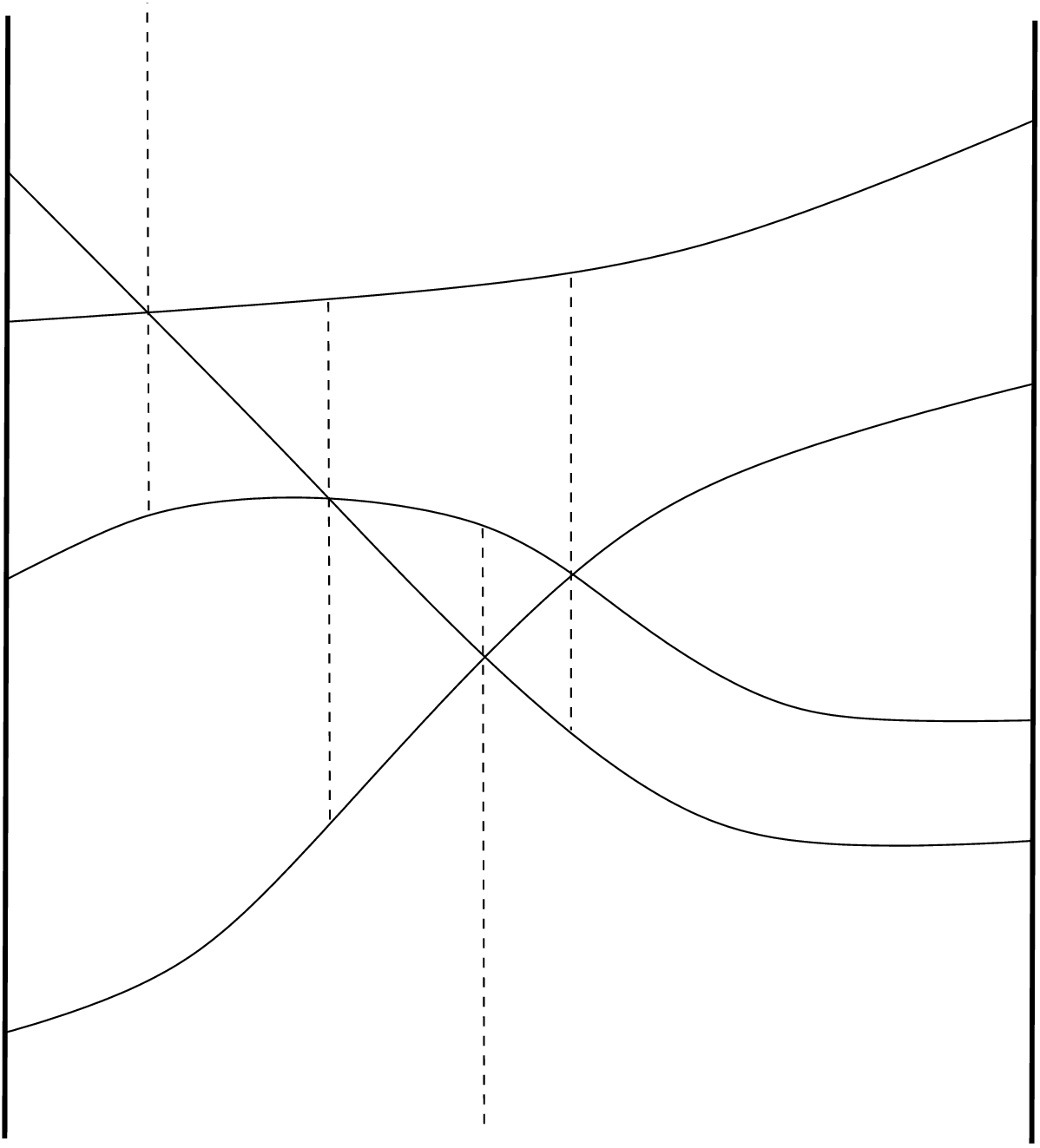}
\caption{Vertical decomposition of $\mathcal{A}$.}\label{figvertical}
\end{figure}
 
 \begin{lemma}[\cite{mat}, Lemma 4.6.1]\label{cut}
     Let $\mathcal{A}$ be a collection of $m$ double grounded $x$-monotone pseudo-segments in the plane.  Then for any parameter $r$, where $1\leq r \leq m$, there is a set of at most $s = 6r\log m$ curves in $\mathcal{A}$ whose vertical decomposition partitions the plane $\mathbb{R}^2 = \Delta_1\cup \cdots \cup \Delta_t$ into $t$ generalized trapezoids, such that $t = O(s^2)$, and the interior of each $\Delta_i$ crosses at most $m/r$ members in $\mathcal{A}$.  
 \end{lemma}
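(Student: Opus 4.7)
The plan is to apply a random sampling argument of Clarkson--Shor type. I would choose a parameter $r$ with $1 \leq r \leq m$, set $s = 6r\log m$, and pick a uniformly random subset $R \subseteq \mathcal{A}$ of size $s$. The goal is to show that with positive probability, the vertical decomposition of $R$ has the desired properties.

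First, I would verify the combinatorial structure. Since $\mathcal{A}$ is a family of pseudo-segments and is double grounded, each cell $\Delta$ in the vertical decomposition of any subfamily $R \subseteq \mathcal{A}$ is a generalized trapezoid bounded above and below by at most two curves of $R$, and on its left and right by vertical walls erected at either a crossing of two curves of $R$ or an endpoint of a curve of $R$ lying on one of the two grounds. Thus each $\Delta$ is determined by a set $D(\Delta) \subseteq R$ of at most four curves from $\mathcal{A}$. Since pseudo-segments cross at most once, $R$ produces at most $\binom{s}{2}$ crossing points and $2s$ endpoints, giving $t = O(s^2)$ cells in total.

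Next, I would carry out the Clarkson--Shor probabilistic estimate. Define a \emph{potential trapezoid} to be any generalized trapezoid $\Delta$ determined by a choice $D(\Delta)$ of at most four curves from $\mathcal{A}$; there are at most $O(m^4)$ such potential trapezoids. Let $K(\Delta) = \{c \in \mathcal{A} \setminus D(\Delta) : c \text{ meets the interior of } \Delta\}$. Then $\Delta$ appears in the vertical decomposition of $R$ if and only if $D(\Delta) \subseteq R$ and $K(\Delta) \cap R = \emptyset$. For a fixed potential $\Delta$ with $|K(\Delta)| > m/r$, the probability that this occurs is at most
\[
\frac{\binom{m - |D(\Delta)| - |K(\Delta)|}{s - |D(\Delta)|}}{\binom{m}{s}} \leq \left(1 - \tfrac{s-|D(\Delta)|}{m-|D(\Delta)|}\right)^{|K(\Delta)|} \cdot O(1) \leq e^{-\Omega(s/r)} = m^{-\Omega(1)}
\]
with the implied constant in the exponent large enough (via our choice $s = 6r\log m$) to beat the $O(m^4)$ union bound over all potential trapezoids. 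Thus with positive probability, every cell of the vertical decomposition of $R$ has at most $m/r$ crossing curves from $\mathcal{A}$, producing the desired set of $s = 6r\log m$ pseudo-segments.

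The main obstacle — really the only subtle point, since after that everything is routine — is justifying that each cell is determined by $O(1)$ curves, including the cases where a vertical wall arises from an endpoint on a ground rather than from a crossing; once this is established, the number of potential trapezoids is polynomial in $m$ and the standard exponential tail bound dominates. The double-grounded hypothesis is crucial here: it ensures that the vertical walls are well-defined and bounded by the grounds, so no cell has unbounded complexity arising from curves extending off to infinity in bad ways.
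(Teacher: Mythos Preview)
Your proposal is correct and is precisely the standard Clarkson--Shor random sampling argument that the paper invokes; the paper does not give its own proof of this lemma but simply cites \cite{CS} and Lemma~4.6.1 of \cite{mat} for exactly this approach. Your identification of the bounded-description-complexity property of the generalized trapezoids (each determined by $O(1)$ curves) as the key structural input is on point, and the probabilistic tail bound with $s = 6r\log m$ is the routine calculation that yields the result.
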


Let $f(m,n)$ denote the number of labelled intersection graphs between a collection $\mathcal{A}$ of $m$ double grounded $x$-monotone curves whose grounds are the vertical lines at $x = 0$ and $x = 1$, and a collection $\mathcal{B}$ of $n$ $x$-monotone curves whose endpoints lie inside the strip $S = [0,1]\times \mathbb{R}$ such that  $\mathcal{A}\cup \mathcal{B}$ is a collection of pseudo-segments.  We now prove the following.

\begin{lemma}\label{abpart}
For $m,n\geq 1$, we have

    $$f(m,n) \leq  2^{O(n^{2/3}m^{2/3}\log^2 m)} + 2^{O(n^{5/4}\log n)} + 2^{O(m\log^3m)}.$$
\end{lemma}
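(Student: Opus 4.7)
The plan is to apply the weak cutting lemma (Lemma~\ref{cut}) to $\mathcal{A}$ with a parameter $r$ to be optimized, producing a subfamily $\mathcal{A}' \subseteq \mathcal{A}$ of size $s = 6r\log m$ whose vertical decomposition partitions the strip into $t = O(s^2)$ generalized trapezoids, each crossed by at most $m/r$ curves of $\mathcal{A}$.

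First, I would enumerate the cutting data: the labelled subset $\mathcal{A}' \subseteq \mathcal{A}$ (in $\binom{m}{s}$ ways) together with the $x$-isomorphism class of its arrangement. By Lemma~\ref{arrange} applied after extending each curve in $\mathcal{A}'$ to a pseudoline, this gives at most $\binom{m}{s}\cdot 2^{O(s^2\log s)} = 2^{O(r^2\log^3 m)}$ distinct cutting types. For each fixed cutting, I then bound the number of bipartite intersection graphs compatible with it.

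By the zone lemma (Lemma~\ref{zone}), each curve in $\mathcal{A}\setminus\mathcal{A}'$ and each $\beta\in\mathcal{B}$ (extended to a pseudoline) meets only $O(s)$ cells of the decomposition, so $\sum_\Delta n_\Delta = O(ns)$ and $\sum_\Delta m_\Delta = O(ms)$, where $n_\Delta, m_\Delta$ denote the numbers of $\mathcal{B}$- and $(\mathcal{A}\setminus\mathcal{A}')$-curves through cell $\Delta$. By Lemma~\ref{ptvc}, the bipartite intersection graph within each cell has VC-dimension at most $d$, and so by Theorem~\ref{vc} the number of such intersection patterns per cell is at most $h_d(n_\Delta, m_\Delta)$, giving a global bound of $\prod_\Delta h_d(n_\Delta, m_\Delta)$. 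The key estimate is to bound $\sum_\Delta n_\Delta^{(d-1)/d} m_\Delta$ using H\"older's inequality together with the constraints on $\sum n_\Delta$, $\sum m_\Delta$, and $m_\Delta \le m/r$; in the typical regime this yields a bound of the form $O(n^{(d-1)/d} m\, r^{1/d}\log m)$.

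The three terms of the lemma correspond to three choices of $r$. Setting $r = \sqrt m$ makes the cutting cost alone equal to $O(m\log^3 m)$, producing the third term. Setting $r = 1$ (no cutting) and applying Theorem~\ref{vc} directly yields $h_d(n, m) \le 2^{O(n^{(d-1)/d} m\log n)}$, which for $m \le n^{1/2}$ is at most $2^{O(n^{3/2 - 1/d}\log n)}$, the second term. Choosing $r$ to balance the cutting cost with the per-cell cost gives the first term $2^{O(n^{d/(2d-1)} m^{(2d-2)/(2d-1)}\log^2 m)}$. The main obstacle will be obtaining the correct $m$-exponent $(2d-2)/(2d-1)$ in the first term rather than the looser $2d/(2d-1)$ that a straightforward Jensen/H\"older estimate produces; this requires combining primal and dual VC-dimension bounds within each cell (using that the dual VC-dimension is also bounded, by an Assouad-type argument together with Lemma~\ref{ptvc}) and carefully exploiting that each $\beta\in\mathcal{B}$ interacts with at most $m/r$ curves of $\mathcal{A}$ per cell along its $O(s)$-cell trajectory, so that the joint counting across cells is tighter than the product of per-cell bounds would suggest.
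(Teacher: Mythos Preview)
Your proposal misses the central geometric observation that drives the paper's proof, and without it the cutting approach is too lossy to reach the stated exponent. The paper does \emph{not} track each $\beta\in\mathcal{B}$ through all $O(s)$ cells it traverses. Instead, it first records additional combinatorial data about $\mathcal{A}$ alone: the orders of the left and right endpoints on the two grounds, and along every curve of $\mathcal{A}'$ the left-to-right order of its crossings with the remaining curves of $\mathcal{A}$ and with the vertical decomposition segments (at total cost $m^{O(sm)}$). With this data fixed, for any $\beta$ and any $\alpha'\in\mathcal{A}$ that does \emph{not} enter either of the two cells containing the endpoints of $\beta$, whether $\alpha'$ passes above or below each endpoint of $\beta$ is already determined; by the pseudo-segment condition this decides whether $\alpha'$ and $\beta$ cross. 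Consequently the only undetermined crossings are between $\beta$ and the at most $2m/r$ curves of $\mathcal{A}$ entering its two endpoint cells, and the relevant sum is $\sum_i n_i \le 2n$, not $O(ns)$.

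This factor of $s$ is exactly what separates the target from the trivial bound. If you carry your estimate through with $\sum_\Delta n_\Delta = O(ns)$ and $m_\Delta\le m/r$, the per-cell VC contribution is at best $(m/r)^{1-1/d}\cdot O(ns)\log m$, and balancing this against the cutting/enumeration cost (which is of order $rm\cdot\mathrm{polylog}$, not $r^2\log^3 m$, since you also need to record how each $\alpha\in\mathcal{A}\setminus\mathcal{A}'$ threads the cells) gives, for $m\approx n$, an exponent of order $n^2$ rather than $n^{3/2-\varepsilon}$. Your suggested remedies---dual VC-dimension inside cells and ``joint counting across cells''---do not supply a mechanism to eliminate this loss; the savings come from the double-grounded/pseudo-segment geometry, not from combinatorial refinements of the VC bound. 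Finally, the three terms in the lemma are not three choices of $r$: the second and third terms arise from applying Theorem~\ref{vc} directly (no cutting) in the extreme ranges $n>m^2$ and $n<m^{1/d}\log^2 m$, while the first term comes from the single choice $r = n^{d/(2d-1)}/(m\log^{2d}m)^{1/(2d-1)}$ in the intermediate range.
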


\begin{proof}
We can assume that $m,n$ are sufficiently large.  Let $\mathcal{A}\cup\mathcal{B}$ be a collection of pseudo-segments where $\mathcal{A}$ and $\mathcal{B}$ are as above. We naturally get two multiset systems.  Let $\mathcal{F}$ be the multiset system with ground set $\mathcal{B}$, and for each curve $\alpha \in \mathcal{A}$, we get a set in the multiset system consisting of the curves $\beta \in \mathcal{B}$ that intersect $\alpha$. The other multiset system, $\mathcal{F}^{\ast}$, is obtained by switching the roles of $\mathcal{A}$ and $\mathcal{B}$. By Lemma \ref{ptvc}, there is an absolute constant $c> 0$ such that $\pi_{\mathcal{F}}(z) \leq cz^2$ and $\pi_{\mathcal{F}^{\ast}}(z) \leq cz^4$.  By enumerating the possible multiset systems $\mathcal{F}$ we have $f(m,n) \leq h_{c,2}(m,n)$, and by enumerating the possible multiset systems $\mathcal{F}^{\ast}$ we have $f(m,n) \leq h_{c,4}(n,m)$.

Suppose $n > m^2$.  Then, by the first part of Theorem \ref{vc}, the number of intersection graphs between $\mathcal{A}$ and $\mathcal{B}$ is at most

\begin{equation}\label{eqnl1}
    h_{c,4}(n,m) \leq 2^{O(mn^{3/4}\log n)}\leq 2^{O(n^{5/4}\log n)}.
\end{equation} 

\noindent  If $n < m^{1/2}\log^2 m$, then by the first part of Theorem \ref{vc}, the number of intersection graphs between $\mathcal{A}$ and $\mathcal{B}$ is at most

\begin{equation}\label{eqnl2}
    h_{c,2}(m,n) \leq 2^{O(m^{1/2}n\log m)}\leq 2^{O(m\log^3 m)}.
\end{equation}
 
Let us assume that $m^{1/2}\log^2m < n < m^2$.   Set $r = \frac{n^{2/3}}{(m\log^{4} m)^{1/3}}$ and $s = 6r\log m$.  Since $m$ and $n$ are sufficiently large, we have $1 \leq r < s \leq m$.  For a set of $m$ double grounded $x$-monotone curves whose grounds are on the vertical lines $x  = 0$ and $x = 1$, there are $(m!)^2$ ways to order the left and right endpoints.  Let us fix such an ordering. 

Let $\mathcal{A}'\subset \mathcal{A}$ be a set of at most $s = 6r\log m$ curves from $\mathcal{A}$ whose arrangement gives rise to a vertical decomposition satisfying Lemma \ref{cut} with parameter $r$.  Note that there are at most $m^s$ choices for $\mathcal{A}'$, and by Lemma \ref{arrange}, there are at most $2^{O(s^2\log s)}$ arrangements for $\mathcal{A}'$, up to $x$-isomorphism.   Once the arrangement of $\mathcal{A}'$ is fixed, the vertical decomposition of $\mathcal{A}'$ is determined.

Let $\mathbb{R}^2 = \Delta_1\cup \cdots \cup \Delta_t$ be the vertical decomposition corresponding to  $\mathcal{A}'$, where $t = O(s^2)$.  Let $\mathcal{A}_i\subset \mathcal{A}$ be the curves in $\mathcal{A}$ that cross the cell $\Delta_i$.  For each curve $\alpha\in \mathcal{A'}$, by Lemma \ref{zone}, at most $O(s)$ vertical segments from the vertical decomposition have an endpoint on $\alpha$.  Moreover, at most $m$ curves from $\mathcal{A}$ cross $\alpha$.  Among these $O(s + m)$ points along $\alpha$, let us fix the order in which they appear along $\alpha$, from left to right.  Since there are at most $s^2$ vertical segments,  there are at most $(s^2 + m)^{O(s + m)} = m^{O(m)}$ ways to fix this ordering, and therefore, there are at most $m^{O(sm)}$ ways to fix such an ordering for each curve $\alpha \in \mathcal{A}'$.

Let $\beta \in \mathcal{B}$.  Then there are $O(s^4)$ choices for the cells $\Delta_i$ for which the endpoints of $\beta$ lie in.   Suppose that the left endpoint of $\beta$ lies in cell $\Delta_i$ and the right endpoint lies in $\Delta_j$, and consider the vertical lines $\ell_1$ and $\ell_2$ that go through the left and right endpoint of $\beta$ respectively.  Then for each $\alpha' \in \mathcal{A}\setminus(\mathcal{A}_i\cup \mathcal{A}_j)$, we have already determined if $\alpha'$ crosses $\beta$.  Indeed, let us consider the left endpoint of $\beta$ and the cell $\Delta_i$.  By the vertical decomposition, $\Delta_i$ is bounded either above or below by some curve $\alpha \in \mathcal{A}'$.  Without loss of generality, let us assume that $\Delta_i$ is bounded from above by $\alpha$.  Let $p$ be the point on $\alpha$ that intersects the left side of $\Delta_i$.  Then for any $\alpha' \in \mathcal{A}\setminus (\mathcal{A}_i\cup \mathcal{A}_j)$, we have already determined if the left endpoint of $\alpha'$ is above or below the left 
endpoint of $\alpha$ along the ground $x = 0$.  Moreover, we have already determined if $\alpha'$ crosses $\alpha$ to the left of point $p$.  Since $\alpha'$ does not cross $\Delta_i$, we have determined if $\alpha'$ crosses $\ell_1$ above or below $\beta$.  See Figure \ref{figalpha}.  By the same argument, we have determined if $\alpha'$ crosses $\ell_2$ above or below the right endpoint of $\beta$.  Therefore, by the pseudo-segment condition, we have determined if $\alpha'$ crosses $\beta$.

\begin{figure}
\centering
\includegraphics[width=4cm]{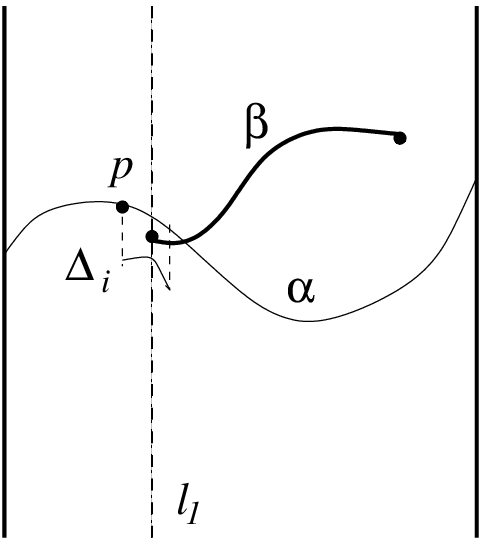}
\caption{Cell $\Delta_i$ bounded above by $\alpha$ and contains the left endpoint of $\beta$.}\label{figalpha}
\end{figure}

It remains to determine how many ways $\beta$ can cross the curves in $\mathcal{A}_i$ and $\mathcal{A}_j$.  By Lemma \ref{cut}, $|\mathcal{A}_i| \leq m/r$.   Let $\mathcal{B}_i$ denote the curves in $\mathcal{B}$ that have at least one endpoint in the cell $\Delta_i$.  Set $n_i  = |\mathcal{B}_i|$.  By Theorem \ref{vc}, there are at most

$$\begin{array}{ccl}
   h_{c,2}(|\mathcal{A}_i|,|\mathcal{B}_i|)  & \leq &  2^{O((m/r)^{1/2}n_i\log m)}
\end{array}$$

 \noindent ways the curves in $\mathcal{A}_i$ cross the curves in $\mathcal{B}_i$.  Putting everything together, the number of ways the curves in $\mathcal{A}$ cross the curves in $\mathcal{B}$ is at most

\begin{equation}\label{eqn14} (m!)^2m^s2^{O(s^2\log s)}m^{O(sm)}\left(s^4\right)^n\prod\limits_{i = 1}^t2^{O((m/r)^{1/2}n_i\log m)} .\end{equation}

\noindent Since $t = O(s^2)$, $r = \frac{n^{2/3}}{(m\log^{4} m)^{1/3}}$, and $s = 6r\log m \leq m$, we have

$$(m!)^2m^s2^{O(s^2\log s)}m^{O(sm)} \leq 2^{O(s^2(m/r)\log m)},$$

\noindent and

$$\left(s^4\right)^n\prod\limits_{i = 1}^t2^{O((m/r)^{1/2}n_i\log m)} \leq 2^{O( (m/r)^{1/2}n\log m)}.$$

\noindent Hence, (\ref{eqn14}) is at most

\begin{equation}\label{eqnl3}
    2^{O( (m/r)^{1/2}n\log m + s^2(m/r)\log m)}\leq 2^{O(n^{2/3}m^{2/3}\log^2 m)},
\end{equation}

\noindent  Combining (\ref{eqnl1}), (\ref{eqnl2}), and (\ref{eqnl3}), we have

  $$f(m,n) \leq  2^{O(n^{2/3}m^{2/3}\log^2 m)} + 2^{O(n^{5/4}\log n)} + 2^{O(m\log^3m)}.$$\end{proof}

 \noindent Hence, we have $f(n,n) \leq 2^{O\left(n^{4/3}\log^2 n\right)}.$
 
\begin{proof}[Proof of Theorem \ref{main12}]
    Let $g(n;p)$ be the number of labeled intersection graphs of at most $n$ $x$-monotone pseudo-segments in the vertical strip $[0,1]\times \mathbb{R}$, such that there are at most $p$ endpoints with $x$-coordinate in $(0,1)$.  Note that some pseudo-segments may contribute two endpoints to $p$.   Then we have the following recurrence.

\begin{claim}
    We have 

    $$g(n;p) \leq 2^{O\left(n^{4/3}\log^2n\right)}g^2(\lceil p/2\rceil;\lceil p/2\rceil).$$
\end{claim}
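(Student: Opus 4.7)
My plan is a divide-and-conquer along a vertical line. I pick $c\in(0,1)$ so that at most $\lceil p/2\rceil$ interior endpoints lie in $(0,c)$ and at most $\lceil p/2\rceil$ lie in $(c,1)$, perturbing slightly so that no endpoint has $x$-coordinate exactly $c$. I then classify each curve according to whether it lies entirely in the left half-strip $[0,c]\times\mathbb{R}$ (set $\mathcal{B}_L$), entirely in the right half-strip (set $\mathcal{B}_R$), or crosses $x=c$ (set $\mathcal{A}$). Recording $c$ and this classification costs at most $2^{O(n)}$ possibilities.

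Next, in the left half-strip with grounds $x=0$ and $x=c$, each piece of a curve in $\mathcal{A}$ has its right endpoint on the new ground $x=c$ and its left endpoint either on the old ground $x=0$ or in $(0,c)$. I let $\mathcal{A}^{dg}_L\subseteq\mathcal{A}$ be the subset that is double grounded in the left half-strip, and set $\mathcal{C}_L:=(\mathcal{A}\setminus \mathcal{A}^{dg}_L)\cup \mathcal{B}_L$ to collect the remaining curves in the left half-strip; I define $\mathcal{A}^{dg}_R$ and $\mathcal{C}_R$ analogously in the right half-strip. Since every curve in $\mathcal{C}_L$ contributes at least one interior endpoint in $(0,c)$, both $|\mathcal{C}_L|$ and the number of interior endpoints of $\mathcal{C}_L$ in the left half-strip are at most $\lceil p/2\rceil$, with analogous bounds on the right. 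Because the curves are pseudo-segments, any two intersect at most once, so the intersection graph of the full family is the disjoint union of its restrictions to the two half-strips; hence it suffices to bound the number of intersection graphs realizable in each half-strip and multiply.

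In the left half-strip the intersection graph decomposes into three independent parts that I bound separately: (i) intersections within $\mathcal{A}^{dg}_L$ form a permutation graph determined by the two permutations of $\mathcal{A}^{dg}_L$ at $x=0$ and $x=c$, giving at most $(|\mathcal{A}^{dg}_L|!)^2\le 2^{O(n\log n)}$ possibilities; (ii) bipartite intersections between $\mathcal{A}^{dg}_L$ and $\mathcal{C}_L$ are bounded by Lemma~\ref{abpart} as $f(|\mathcal{A}^{dg}_L|,|\mathcal{C}_L|)\le f(n,n)\le 2^{O(n^{3/2-1/(4d-2)}\log^2 n)}$; and (iii) intersections within $\mathcal{C}_L$, after rescaling $[0,c]$ affinely to $[0,1]$, form an instance of $g$ with both parameters at most $\lceil p/2\rceil$ and so contribute at most $g(\lceil p/2\rceil;\lceil p/2\rceil)$ possibilities. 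Multiplying the three contributions over the two halves and absorbing the $2^{O(n\log n)}$ and $2^{O(n)}$ factors into $2^{O(n^{3/2-1/(4d-2)}\log^2 n)}$ yields the claimed recurrence.

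The main technical subtlety I expect is the bookkeeping that makes the recursive call in step (iii) exactly to $g(\lceil p/2\rceil;\lceil p/2\rceil)$: curves in $\mathcal{A}$ whose left endpoint lies in $(0,c)$ are \emph{not} double grounded in the left half-strip and cannot be handled by Lemma~\ref{abpart} together with $\mathcal{A}^{dg}_L$; they must be placed into $\mathcal{C}_L$, and it is precisely this placement that forces $|\mathcal{C}_L|\le \lceil p/2\rceil$ rather than only $|\mathcal{C}_L|\le n$. The symmetric balancing on the right is what makes the recurrence contract both parameters of $g$ simultaneously.
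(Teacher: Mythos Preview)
Your proposal is correct and follows essentially the same divide-and-conquer as the paper: split the strip by a vertical line balancing the interior endpoints, separate curves into those double grounded in each half-strip versus those with an interior endpoint there, count the three pieces (permutation graph on the double-grounded set, bipartite part via Lemma~\ref{abpart}, and the recursive call), and multiply over the two halves. Your sets $\mathcal{A}^{dg}_L$ and $\mathcal{C}_L$ coincide with the paper's $\mathcal{A}_i$ and $\mathcal{B}_i$, and your explicit remark about placing non-double-grounded crossing curves into $\mathcal{C}_L$ is exactly the bookkeeping the paper leaves implicit.
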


  \begin{proof}
      For $n$ $x$-monotone curves in the strip $S = [0,1]\times \mathbb{R}$, with $p$ endpoints in the interior of $S$, we can assume that these $p$ endpoints have distinct $x$-coordinates. We partition the interval $[0,1]$ into two parts $I_1,I_2$, so that the interior of each strip $S_i = I_i\times \mathbb{R}$ has at most $\lceil p/2\rceil$ endpoints. Next, we upper bound the number of labeled intersection graphs of the curves restricted to the strip $S_i$. Note that there are $n!$ ways to label the curves. 

Among the curves restricted to the strip $S_i$, let $\mathcal{A}_i$ denote the set of curves that go entirely through $S_i$, and let $\mathcal{B}_i$ be the set of curves with at least one endpoint in the interior of $S_i$.  There are at most $n!$ ways to determine the intersection graph among the curves in $\mathcal{A}_i$.  By Lemma \ref{abpart}, there are at most

$$f(|\mathcal{A}_i|,|\mathcal{B}_i|) \leq f(n,n) \leq 2^{O(n^{4/3}\log^2n)}$$

\noindent ways to determine the intersection graph between $\mathcal{A}_i$ and $\mathcal{B}_i$.  Finally, there are at most $g(\lceil p/2\rceil;\lceil p/2\rceil)$ ways to determine the intersection graph among the curves in $\mathcal{B}_i$.  Putting everything above together gives the desired recurrence.    \end{proof}

Since $p\leq 2n$, the recurrence above gives 

$$g(n;2n) \leq 2^{\sum\limits_{i  = 1}^{\log n} 2^iO\left((n/2^i)^{4/3}\log^2(n/2^i)\right)}g(1;1) \leq 2^{O(n^{4/3}\log^2n)}.$$

\end{proof}

\subsection{Intersection graphs with small clique number}\label{secbip1}

In this subsection, we prove Theorem \ref{main3}.

\begin{proof}[Proof of Theorem \ref{main3}]
 Let $g_k(n;p)$ be the number of labeled intersection graphs of at most $n$ $x$-monotone pseudosegments with clique number at most $k$ in the vertical strip $[0,1]\times \mathbb{R}$, such that there are at most $p$ endpoints with $x$-coordinate in $(0,1)$.   Similar to above, we will show 
 
$$g_k(n;p) \leq n^{6n + 2kp}g^2_k(\lceil p/2\rceil;\lceil p/2\rceil).$$

Indeed, for $n$ $x$-monotone pseudosegments in the strip $S = [0,1]\times \mathbb{R}$, with $p$ endpoints in the interior of $S$, we can assume that these $p$ endpoints have distinct $x$-coordinate. We partition the interval $[0,1]$ into two parts $I_1,I_2$, so that the interior of each strip $S_i = I_i\times \mathbb{R}$ has at most $ \lceil p/2\rceil$ endpoints.  We now bound the number of labeled intersection graphs of the curves restricted to $S_1$.

 There are at most $n!$ ways to label the curves in $S_1$.  There are at most $2^n$ ways to choose the set $\mathcal{A}$ of pseudo-segments that goes entirely through $S_1$.  Let $G_{\mathcal{A}}$ denote its intersection graph of $\mathcal{A}$.  Then $G_{\mathcal{A}}$ depends entirely on the permutation of the endpoints of $\mathcal{A}$. 
 Hence, $G_{\mathcal{A}}$ is an incomparability graph of a 2-dimensional poset and there are at most $n!$ ways to determine $G_{\mathcal{A}}$. Since $G_{\mathcal{A}}$ has clique number at most $k$, by Dilworth's theorem \cite{d}, $G_{\mathcal{A}}$ has chromatic number at most $k$.  Thus, there are at most $k^n$ ways to properly color the vertices of $G_{\mathcal{A}}$.  After fixing such a coloring, let $\mathcal{A}_1,\ldots, \mathcal{A}_k$ denote the color classes.  Since the curves in $\mathcal{A}_i$ are pairwise disjoint and go through $S_1$, for each curve $\gamma$ with an endpoint in the interior of $S_1$, there are at most $n^2$ ways $\gamma$ can intersect the curves in $\mathcal{A}_i$.  Moreover, since there are $p/2$ such endpoints, there are at most $p/2$ such curves. Therefore, there are at most $(n^2)^k$ ways $\gamma$ can intersect the curves in $\mathcal{A}$.   Since $k\leq n$, there are at most
 
 $$n!2^nn!k^n(n^2)^{kp/2}g_k(\lceil p/2;p/2\rceil) \leq n^{4n + kp}g_k(\lceil p/2\rceil ;\lceil p/2\rceil)$$
 
\noindent labeled intersection graphs among the curves restricted to $S_1$.  A similar argument holds for the curves restricted to $S_2$.  Hence,

$$g_k(n;p) \leq n^{8n + 2kp}g^2_k(\lceil p/2\rceil;\lceil p/2\rceil).$$

 \noindent Iterating the inequality above $t$ times gives

 $$g_k(n;p) \leq  n^{8n + 2kp}\left(\frac{p}{2}\right)^{8p + 2kp}\left(\frac{p}{2^2}\right)^{8p + 2kp}\cdots \left(\frac{p}{2^{t-1}}\right)^{8p + 2kp}g^{2^t}(\lceil p/2^t\rceil;\lceil p/2^t\rceil).$$
 
\noindent Hence for $t = \lceil\log_2n\rceil$, we have

$$g_k(n;p) \leq n^{8n + 2kp}p^{(8p + 2kp)t}.$$
 
\noindent By setting $p = 2n$, we have

$$g_k(n;2n) \leq 2^{O(kn\log^2n)},$$

\noindent and Theorem \ref{main3} follows. \end{proof}

\section{Bipartite intersection graphs of $x$-monotone curves}\label{secbip2}

In this section, we prove Theorem \ref{main}.  Recall that here, two $x$-monotone curves may cross each other more than once. The proof is very similar to the proof of Theorem \ref{main3} above.  Let $w(n;p)$ be the number of labeled bipartite intersection graphs of at most $n$ $x$-monotone curves in the vertical strip $[0,1]\times \mathbb{R}$, such that there are at most $p$ endpoints with $x$-coordinate in $(0,1)$.  We establish the following recurrence.



\begin{lemma}\label{rec}
We have

$$w(n;p) \leq n^{6n}w^2(\lceil p/2\rceil;\lceil p/2\rceil).$$

\end{lemma}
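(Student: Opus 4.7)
The plan is to mirror the proof of Theorem~\ref{main3} with $k=2$, using the bipartite structure to streamline the count. I first partition $[0,1]$ into two sub-intervals $I_1,I_2$ so that each sub-strip $S_i := I_i \times \RR$ contains at most $\lceil p/2\rceil$ of the interior endpoints. The intersection graph restricted to $S_i$ is a subgraph of the original bipartite graph and is therefore itself bipartite, and the full intersection graph is the edge-union of the two restricted graphs. So it suffices to bound the number of possible restricted intersection graphs in each $S_i$ and multiply; the factor $w^2(\lceil p/2\rceil;\lceil p/2\rceil)$ will come from recursing on the curves that have an endpoint inside $S_i$.

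Within $S_i$, I will split the curves into $\mathcal{A}_i$ (those that span $S_i$, with no endpoint in the interior of $I_i$) and $\mathcal{B}_i$ (those with at least one endpoint inside $I_i$), so $|\mathcal{B}_i|\le \lceil p/2\rceil$. Since the restricted graph on $\mathcal{A}_i$ is bipartite, I 2-color $\mathcal{A}_i=\mathcal{A}_i^{(1)}\cup \mathcal{A}_i^{(2)}$; within each color class the curves are pairwise disjoint, $x$-monotone, and span $S_i$, so they have a fixed vertical order across all of $S_i$. The central structural claim, playing the role of the ``pairwise disjoint pseudo-segments'' step in the proof of Theorem~\ref{main3}, is this: for any $x$-monotone curve $\gamma$ inside $S_i$, the set of curves in $\mathcal{A}_i^{(j)}$ that $\gamma$ crosses is a \emph{contiguous interval} in the vertical order of $\mathcal{A}_i^{(j)}$. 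This will follow from the intermediate value theorem applied to the function $\gamma(x)-\alpha(x)$: if $\gamma$ crosses the $a$-th and $c$-th curves of the order with $a<c$, then for each $a<b<c$, $\gamma$ lies strictly below the $b$-th curve at the first crossing and strictly above it at the second, forcing another crossing.

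Equipped with this interval structure, I enumerate the restricted graph in $S_i$ as follows: choose $\mathcal{A}_i$ as a subset of the $n$ labels ($\le 2^n$); fix the 2-coloring of $\mathcal{A}_i$ ($\le 2^n$); record the combined vertical order of $\mathcal{A}_i$ at the left boundary of $I_i$ ($\le n!$), which determines the vertical order of each color class throughout $S_i$; for each $\alpha\in\mathcal{A}_i^{(1)}$ specify its crossing-interval in $\mathcal{A}_i^{(2)}$ ($\le(n+1)^2$ per curve, hence at most $n^{2|\mathcal{A}_i^{(1)}|}$ in total); assign each $\beta\in\mathcal{B}_i$ to one side of the bipartition ($\le 2^n$) and specify its crossing-interval in the opposite color class ($\le n^{2|\mathcal{B}_i|}\le n^p$); and finally invoke the recursion $w(\lceil p/2\rceil;\lceil p/2\rceil)$ for the bipartite intersection graph inside $\mathcal{B}_i$. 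The three $2^n$ factors can be consolidated by picking a single global 2-coloring of all $n$ curves once and for all, and the remaining product per strip is $n^{O(n)}$; multiplying over $i=1,2$ gives the claimed bound $n^{6n}\, w^2(\lceil p/2\rceil;\lceil p/2\rceil)$.

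The main obstacle is the structural interval claim: without the bipartite hypothesis, the restricted graph on $\mathcal{A}_i$ would not be 2-colorable, and we could not reduce the ``which curves does $\gamma$ cross'' question to choosing an interval in a single linear order. The remaining work is bookkeeping to make the constant in the exponent at most $6$, which requires sharing the bipartition globally across the two strips and sharing the vertical-order permutation across the two color classes in each strip; any linear exponent $n^{O(n)}$ would in fact already suffice for the subsequent solution of the recurrence in the proof of Theorem~\ref{main}.
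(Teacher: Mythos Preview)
Your proposal is correct and follows essentially the same route as the paper: split the strip at the median endpoint, use the bipartite $2$-coloring so that the through-curves in each color class are pairwise disjoint and hence vertically ordered, invoke the interval structure to bound the crossings of any curve with a color class by $O(n^2)$ choices, and recurse on the $\lceil p/2\rceil$ curves with an endpoint inside each sub-strip. The paper's proof is terser---it lumps all curves $\gamma$ together and accounts $n!\cdot(n^2)^n<n^{3n}$ per strip without separately itemizing the choice of $\mathcal{A}_i$, the $2$-coloring, or the vertical permutation---while you unpack these factors explicitly; as you correctly note, any bound of the form $n^{O(n)}$ per strip suffices for the subsequent recursion, so the precise constant $6$ is immaterial.
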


\begin{proof}

For $n$ $x$-monotone curves in the strip $S = [0,1]\times \mathbb{R}$, with $p$ endpoints in the interior of $S$, we can assume that these $p$ endpoints have distinct $x$-coordinates. We partition the interval $[0,1]$ into two parts $I_1,I_2$, so that the interior of each strip $S_i = I_i\times \mathbb{R}$ has at most $\lceil p/2\rceil$ endpoints. Next, we upper bound the number of labeled intersection graphs of the curves restricted to the strip $S_i$. Note that there are $n!$ ways to label the curves. 

For each curve $\gamma$, as the graph is bipartite, let us count the number of ways $\gamma$ can intersect the set of pairwise disjoint curves that go entirely through $S_i$.  By ordering these pairwise disjoint curves  vertically, this intersection set is an interval with respect to this vertical ordering.  Hence, $\gamma$ has at most $n^2$ ways to intersect the family of curves that goes entirely through $S_i$. This gives a total of at most $n!(n^2)^n < n^{3n}$ ways of determining the intersection graph in $S_i$, apart from the induced subgraph on the curves with at least one endpoint in the interior of $S_i$.  Since there are $p/2$ such endpoints, there are at most $p/2$ such curves. Thus we have at most $w(\lceil p/2\rceil;\lceil p/2\rceil)$ possible such intersection graphs of the curves with one end point in $S_i$. Thus we have at most $n^{3n}w(\lceil p/2\rceil,\lceil p/2\rceil)$ possible intersection graphs restricted to $S_i$.  Since the intersection graph of all $n$ curves is the union of the intersection graphs on $S_1$ and $S_2$, we get in total at most $(n^{3n}w(\lceil p/2\rceil;\lceil p/2\rceil))^2$ such choices.  \end{proof}

\begin{proof}[Proof of Theorem \ref{main}] It suffices to bound $w(n;2n)$ as the original $n$ curves have $2n$ endpoints.  Iterating the recurrence in Lemma \ref{rec} $t$ times gives

$$w(n;p) \leq n^{6n}\left(\frac{p}{2}\right)^{6p}\left(\frac{p}{2^2}\right)^{6p}\cdots \left(\frac{p}{2^{t-1}}\right)^{6p} w^{2^t}(\lceil p/2^{t }\rceil;\lceil p/2^{t}\rceil).$$

\noindent Thus for $t= \lceil \log_2 n\rceil $, we get   $$w(n;p) \leq n^{6n}p^{6pt}.$$

\noindent Hence,

$$w(n;2n)\leq 2^{O(n\log^2 n)}.$$

\end{proof}

Let us remark that in \cite{fpt}, the first two authors showed that there is an absolute constant $c> 0$ such that every $n$-vertex string graph with clique number $k$ has chromatic number at most $(C\frac{\log n}{\log k})^{c\log k}$. Together with Corollary~\ref{main2}, we obtain the following.

\begin{corollary}
For every $\epsilon>0$, there is $\delta>0$ such that the number of intersection graphs of $n$ $x$-monotone curves with clique number at most $n^{\delta}$ is at most $2^{n^{1+\epsilon}}$.
\end{corollary}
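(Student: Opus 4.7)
The plan is to chain together the Fox--Pach chromatic-number bound cited from \cite{fpt} with Corollary \ref{main2}. Given $\epsilon > 0$, I will set $k := n^\delta$ and choose $\delta = \delta(\epsilon)$ sufficiently small at the end.

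First, by \cite{fpt}, any $n$-vertex string graph with clique number at most $k$ has chromatic number at most $(C\log n / \log k)^{c \log k}$, where $c, C > 0$ are absolute constants. An intersection graph of $x$-monotone curves is in particular a string graph, so this bound applies. Substituting $k = n^\delta$ gives $\log n / \log k = 1/\delta$, hence the chromatic number is at most
$$q := (C/\delta)^{c \delta \log n} = n^{c \delta \log(C/\delta)}.$$
Write $\eta(\delta) := c \delta \log(C/\delta)$ and note $\eta(\delta) \to 0$ as $\delta \to 0^+$. Next, applying Corollary \ref{main2} with this value of $q$, the number of labelled $n$-vertex intersection graphs of $x$-monotone curves with chromatic number at most $q$ is at most $2^{O(qn\log^2 n)} = 2^{O(n^{1+\eta(\delta)}\log^2 n)}$.

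To finish, I would choose $\delta = \delta(\epsilon) > 0$ so small that $\eta(\delta) < \epsilon/2$. For all sufficiently large $n$ the factor $\log^2 n$ is absorbed into $n^{\epsilon/2}$, yielding the total bound $2^{n^{1+\epsilon}}$. There is essentially no obstacle; the only minor subtlety is that the Fox--Pach bound is a function of $k$ and must upper-bound the chromatic number for \emph{all} $k \in \{1, \dots, \lfloor n^\delta \rfloor\}$ simultaneously. This follows from the monotonicity of $t \mapsto t \log(C/t)$ on a suitable right-neighbourhood of $0$ (so that substituting the extremal value $k = n^\delta$ indeed dominates), while the trivial case $k = 1$ contributes only a single (edgeless) graph and can be discarded. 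Thus the only real content of the argument is the arithmetic of selecting $\delta$ to balance the two cited bounds.
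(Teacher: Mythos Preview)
Your proposal is correct and follows exactly the approach the paper intends: combine the Fox--Pach chromatic-number bound from \cite{fpt} with Corollary~\ref{main2}, and then choose $\delta$ small enough that the resulting exponent $c\delta\log(C/\delta)$ (plus the $\log^2 n$ factor) is below $\epsilon$. The paper leaves the arithmetic implicit, and your write-up fills it in accurately, including the monotonicity check ensuring that the extremal case $k=n^\delta$ dominates all smaller clique numbers.
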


\section{Concluding remarks} 

An important motivation for enumerating intersection graphs of curves of various kinds came from a question in graph drawing~\cite{PT06}: How many ways can one draw a graph? The number of different (non-isomorphic) drawings of $K_n$, a complete graph of $n$ vertices, can be upper-bounded by the number intersection graphs of ${n\choose 2}$ curves. By~\cite{PT06}, this is at most $2^{(3/2^5+o(1))n^4}$. 

The number of non-isomorphic \emph{straight-line drawings} of $K_n$ cannot exceed the the number of different intersection graphs of ${n\choose 2}$ segments in the plane, which is $2^{(4+o(1))n^2\log n}$; see~\cite{S, PS}. However, the true order of magnitude of the number of straight-line drawings of $K_n$ is much smaller. As was pointed out in~\cite{PT06}, this quantity is equal to the number of \emph{order types} of $n$ points in general position in the plane. The latter quantity is $2^{(4+o(1))n\log n}$, according to seminal results of Goodman--Pollack~\cite{GP} and Alon~\cite{Al}, based on Warren's theorem in real algebraic geometry~\cite{W}. 

Recall that Theorem~\ref{vc} in Section \ref{secvc} shows that for $d \geq 2$ fixed and $m,n \geq 2$, the number $h'_d(m,n)$ of set systems of $m$ subsets of $[n]$ that have VC-dimension at most $d$ is at most $2^{O(nm^{1-1/d}\log m)}$. It would be interesting to remove the logarithmic factor in the exponent, which would answer the question of Alon et al.~\cite{alon} mentioned in the beginning of Section~\ref{secvc}. A natural approach, which has worked for similar enumerative problems, is to recast the problem as counting independent sets in an auxiliary hypergraph and use the hypergraph container method. Consider the $2^{d+1}$-uniform hypergraph $H$ with vertex set $2^{[n]}$ (so the vertices are just the subsets of $[n]$) and a $2^{d+1}$-tuple of vertices forms an edge if they shatter a subset of the ground set of size $d+1$. The function $h'_{d}(m,n)$ then just counts the number of independent sets of size $m$ in $H$. The hypergraph container method (introduced in \cite{BMS0,ST15}, see also \cite{BMS}) is a powerful tool that is useful for counting independent sets in similar settings. It would be interesting if one could adapt these techniques to give better bounds on $h'_d(m,n)$. 

The last five results in the introduction give upper bounds on the number of intersection graphs or the number of non-isomorphic drawings of graphs under various constraints. It would be interesting to close the gap between these upper bounds and lower bounds. 

The following simple construction shows that there are $2^{\Omega(n\log n)}$ unlabelled bipartite graphs on $n$ vertices that are intersection graphs of segments. Take one set (left group) of $k=n/(3\log n)$ vertical segments crossing the $x$-axis near -1, another set (middle group) of $k$ vertical segments crossing the $x$-axis near the origin, and another set (right group) of $k$ vertical segments crossing the $x$-axis near +1. For the middle group of vertical segments, we translate them vertically so that their upper-endpoints are very close to the origin, and form a ``staircase" pattern.  The remaining $n - 3k$ segments are horizontal.  For each horizontal segment $s$ and integer $i \in [k]$, we have the freedom of choosing the left endpoint of $s$ so that it intersects precisely the last $i$ vertical segments from the first group.  Likewise, for each $j\in [k]$, we have the freedom of choosing the right endpoint of $s$ so that it intersects precisely the first $j$ vertical segments from third group.  Moreover, by choosing the $y$-coordinate of segment $s$, for each $\ell [k]$, we can make $s$ intersect precisely the last $\ell$ segments from the middle group.  One gets $2^{(3-o(1))n\log_2 n}$ labelled bipartite intersection graphs (and hence at least $2^{(2-o(1))n \log_2 n}$ unlabelled bipartite intersection graphs). This shows that Theorem \ref{main} is tight up to a single logarithmic factor in the exponent. 

Viewing the same construction as a drawing of a matching (with the endpoints of segments as vertices of the matching), gives $2^{\Omega(n \log n)}$ non-isomorphic straight-line drawings of a matching on $n$ vertices whose edge-intersection graph is bipartite, providing a lower bound for Corollary \ref{drawcor}.



\medskip

\noindent \textbf{Acknowledgement.}  We would like to thank Zixiang Xu for pointing out the reference \cite{alon2} to us.  We would also like to thank the anonymous referees for helpful comments, and in particular, the referee that pointed out Theorem 1.7 in \cite{P} which led to the improved version of Theorem \ref{main12}.

\end{document}